\newcounter{dummy}
\numberwithin{dummy}{section}
\newtheorem*{main}{Main Theorem}
\newtheorem{thm}[dummy]{Theorem}
\newtheorem{defn}{Definition}[section]
\newtheorem{lem}[dummy]{Lemma}
\newtheorem{cor}[dummy]{Corollary}
\renewcommand\Im{\operatorname{Im}}
\renewcommand{\@biblabel}[1]{[#1]\hfill}
\begin{document}
\title{Effective Approximation and Diophantine Applications}

\author{Gabriel. A. Dill}
\address{Departement Mathematik und Informatik, Spiegelgasse 1, CH-4051 Basel}
\email{gabriel.dill@unibas.ch}

\date{}

\begin{abstract}
Using the Thue-Siegel method, we obtain effective improvements on Liouville's irrationality measure for certain one-parameter families of algebraic numbers, defined by equations of the type $(t-a)Q(t)+P(t)=0$. We apply these to some corresponding Diophantine equations. We obtain bounds for the size of solutions, which depend polynomially on $a$, and bounds for the number of these solutions, which are independent of $a$ and in some cases even independent of the degree of the equation.
\end{abstract}

\subjclass[2010]{Primary 11D41; Secondary 11D45, 11D57, 11J68}

\keywords{Diophantine approximation, effective irrationality measures, Thue-Siegel method, polynomial bounds for Thue equations, counting solutions of Diophantine equations}

\maketitle

\thispagestyle{empty}

\section{Introduction}\label{sec:Introduction}
Let $\theta$ be an irrational algebraic number of degree $d$, then it is well-known that there is an effectively computable constant $C$ such that
\[\left|\theta-\frac{p}{q}\right| \geq \frac{1}{Cq^d}\]
for all integers $p$ and $q \geq 1$.

For $dÊ\geq 3$ we can do much better and Thue proved in his seminal paper \cite{T09} that any $\kappa > \frac{d}{2}+1$ can be taken as the exponent of $q$ in the denominator instead of $d$ to yield a bound of the form
\begin{equation}\label{eq:essentiallowerbound}
\left|\theta-\frac{p}{q}\right| \geq \frac{1}{Cq^\kappa}.
\end{equation}
This was improved to any
\[\kappa > \min_{e=1,\hdots,d-1}\left\{\frac{d}{e+1}+e\right\}\] 
by Siegel in his dissertation \cite{S21} (in particular, $\kappa = 2\sqrt{d}$ can be taken), then to $\kappa = \sqrt{2d}$ by Dyson in \cite{D47} and finally to any $\kappa > 2$ by Roth in \cite{R55}, which is almost best possible. Unfortunately, in all these improvements the constant $C$ is not effective.

The first effective improvement of the exponent to $\kappa=d-\epsilon$, $\epsilon=\epsilon(\theta)>0$, was found by Fel$'$dman and based on Baker's method of linear forms in logarithms; see \cite{F71} and \cite{BG96}, Corollary 1, for a later improvement. For special $\theta$, it is well-known that one can obtain better effective results, usually with the help of hypergeometric functions.

Using the Thue-Siegel method, Bombieri considered the equation
\begin{equation}\label{eq:bombierisequation}
t^{d}-at^{d-1}+1=0
\end{equation}
and proved that the unique real root $\theta > 1$ of this equation satisfies
\begin{equation}\label{eq:lowerboundbombieri}
\left| \theta - \frac{p}{q} \right| \geq \frac{1}{C(\theta)q^{39.2574}}
\end{equation}
for all integers $p$ and $qÊ\geq 1$ under the condition that $a \geq a_0(d)$ and $q \geq q_0(d)$ for effectively computable $C(\theta)$, $a_0(d)$ and $q_0(d)$ (cf. \cite{B82}, p. 294, Example 3).

The purpose of the present paper is threefold:

First, we calculate explicit values of $C(\theta)$ and $a_0$ in results like \eqref{eq:lowerboundbombieri} with particular attention to the parameter $a$, getting rid of $q_0$ in the process. For example, if $d \geq 23$ and $\left|a\right| \geq 2^{1196d}$, we get exponent $22.99$ and
\begin{equation}
C(\theta) = C(d,a) = 2^{84d^2}\left|a\right|^{28d^2}
\end{equation}
for the unique real root $\theta=\xi$ of \eqref{eq:bombierisequation} with $\left|\xi-a\right|<1$, as we will see in Corollary \ref{cor:bombieripoly}. By the way, this $\xi$ indeed has degree exactly $d$ --- see Lemma \ref{lem:rouche} for this and more.

Second, we apply our approximation results to Diophantine equations. That corresponding to \eqref{eq:bombierisequation} is
\begin{equation}\label{eq:bombierisdiophantineequation}
x^d-ax^{d-1}y+y^{d}=m
\end{equation}
with $m \in \mathbb{Z}$. But results like \eqref{eq:lowerboundbombieri} as they stand do not imply anything for the (integral) solutions of \eqref{eq:bombierisdiophantineequation}, due to the other roots of \eqref{eq:bombierisequation}. It turns out that with minor conditions one can render these non-real and therefore harmless, thus leading to bounds provided that $\left|a\right|$ is large enough.

But even this proviso can be eliminated with linear forms in logarithms and we obtain as an immediate consequence of Theorem \ref{cor:bombieriequation} that any solution of
\[Êx^d-ax^{d-1}y+y^d=m\]
in integers $x$ and $y$, where $d \geq 23$, $d$ odd and $a \leq -4$, satisfies
\[\max\{\left|x\right|,\left|y\right|\} \leq c(d)\left|a\right|^{\lambda(d)}\left|m\right|^{\mu(d)}\]
for some explicit functions $c$, $\lambda$ and $\mu$ of $d$.

Here, the polynomial dependence on $m$ is a familiar feature; however the exponent usually depends on the coefficients of the left-hand side of \eqref{eq:bombierisdiophantineequation}. Our exponent depends only on $d$. The polynomial dependence on $a$, which comes directly from our approximation results, is much less common in the literature.

Since $x=a$, $y=m=1$ is a solution, we see that $\max\{\left|x\right|,\left|y\right|\}$ grows polynomially in $a$. Similarly taking $x^d=m$, $y=0$ we see that it grows polynomially in $\left|m\right|$. In this sense, our result is best possible even though the exponents $\lambda(d)$ and $\mu(d)$ probably are not.

It is also in line with a folklore belief that all integer solutions of
\begin{equation}\label{eq:essentialdiophantineineq}
F(x,y)= m
\end{equation}
can be bounded polynomially in terms of $m$ and the coefficients of $F$, where $F$ is an irreducible binary form of degree $dÊ\geq 3$ with integer coefficients.

Thirdly, we combine our results with gap arguments to find upper bounds for the number of solutions. It turns out that our approximation results are usually strong enough to yield bounds which are independent of $a$. This feature occurs already in the well-known result of Bombieri and Schmidt that \eqref{eq:essentialdiophantineineq} has at most $215d$ solutions in the case $m=1$ if $d$ is large enough (see \cite{BS87}, pp. 69sq.).

However, our bounds are in some cases even independent of $d$; for example, we will show in Section \ref{sec:ApplicationsII} that the number of integral solutions of 
\[(x-ay)(x^2+y^2)^{\frac{d-1}{2}}-y^d=x+y\]
is at most $11$, independently of both $d$ and $a$, if $d \geq 25$ is odd and (unfortunately) $\left|a\right| \geq 2^{164d}$. This particular equation is of no special significance, but has been chosen as an example to illustrate the method.

The independence of $d$ also occurs in work of Mueller and Schmidt on $F(x,y)=f(x,y)$ provided that the number of non-zero coefficients of $F$ and the degree of $f$ are bounded independently of $d$ (see \cite{MS95}, pp. 332sq.). However, our $F(x,y)$ has the maximum number $d+1$ of non-zero coefficients.

Our proofs of \eqref{eq:lowerboundbombieri} use the Thue-Siegel method like those of Bombieri in \cite{B82} and \cite{B87}, except that we replace his use of the Dyson lemma with a much simpler zero estimate. This idea as well as the basis of Section \ref{sec:Main Theorem} and the inspiration for this whole article is owed to an unpublished work by Masser --- an outline appeared in \cite{BC03}, Appendix, pp.\ 59--61. On the whole, we refrain from using some of the more intricate ideas and estimates in Bombieri's work; in return we are able to easily calculate explicit values for all occurring constants.

\section{Preliminaries}\label{sec:Preliminaries}

If $\tau$ is any real number, we denote by $\left[\tau\right]$ the largest integer which is smaller than or equal to $\tau$. For a polynomial $P$ in two variables $x$ and $y$ and an integer $l \geq 0$, we write
\[ÊP_{l}(x,y)=\frac{1}{l!}\frac{\partial^l P}{\partial x^l}\left(x,y\right).\]

We refer to \cite{BG06}, section 1.5, for the definition of the absolute multiplicative height $H(\underline{\alpha})$ of $\underline{\alpha} \in \mathbb{P}^N(K)$, where $K$ is a number field, as well as the definition of the absolute multiplicative height or simply the height $H(\alpha)$ of $\alpha \in K$. We will also need the height of a linear form.
\begin{defn}\label{defn:linearheight}
The projective absolute height $H(L)$ of a linear form
\[L(x_1,\hdots,x_N)=\alpha_1 x_1+\cdots+\alpha_N x_N\]
with coefficients in $K$ is defined by
\[H(L)=H(\alpha_1:\cdots:\alpha_N).\]
\end{defn}

One finds that
\begin{equation}\label{eq:heightone}
H\left(\frac{p}{q}\right)=\max\{\left|p\right|,\left|q\right|\}
\end{equation}
for coprime integers $p$ and $qÊ\neq 0$. We further recall that for algebraic $\alpha$ and $\beta$ we have
\begin{equation}\label{eq:heighttwo}
H(\alpha+\beta) \leq 2H(\alpha)H(\beta)
\end{equation}
and
\begin{equation}\label{eq:heightthree}
\left|\alpha\right| \geq H(\alpha)^{-d} \quad (\alpha \neq 0).
\end{equation}
Using these properties, one easily proves
\begin{equation}\label{eq:imaginarypartbound}
\left|\Im\theta\right|=\frac{1}{2}\left|\theta-\overline{\theta}\right|\geq \frac{1}{2}\left(2H(\theta)^2\right)^{-d^2},
\end{equation}
if $\theta$ is non-real and $\overline{\theta}$ is the complex conjugate of $\theta$, since $[\mathbb{Q}(\theta,\overline{\theta}):\mathbb{Q}] \leq d^2$. This bound will be used in Section \ref{sec:Applications} to deal with the non-real zeroes of $F$.

\begin{lem}\label{lem:Siegel}
Let $K$ be an algebraic number field of degree $d$ and let $M$, $N$ be positive integers with $N > dM$. Put $\mu=\frac{dM}{N-dM}$. Let $L_1(x_1,\hdots,x_N)$, \dots, $L_M(x_1,\hdots,x_N)$ be linear forms with coefficients in $K$ and projective absolute heights at most $\mathcal{H} \geq 1$. Then there exist rational integers $x_1$, \dots, $x_N$, not all zero, of absolute values at most $(\sqrt{N}\mathcal{H})^{\mu}$, such that
\[ L_1(x_1,\hdots,x_N)=\cdots=L_M(x_1,\hdots,x_N)=0.\]
\end{lem}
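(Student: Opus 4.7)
The plan is to reduce this number-field Siegel lemma to the classical version over $\mathbb{Q}$. Fix a $\mathbb{Q}$-basis $\omega_1,\ldots,\omega_d$ of $K$ (for instance the power basis $1,\theta,\ldots,\theta^{d-1}$ of a primitive element $\theta$, normalised to avoid introducing discriminant factors, or an integral basis). For each coefficient of each $L_i$, write $\alpha_{ik}=\sum_{j=1}^d c_{ikj}\omega_j$ with $c_{ikj}\in\mathbb{Q}$. Then
\[ L_i(x_1,\ldots,x_N)=\sum_{j=1}^d\Bigl(\sum_{k=1}^N c_{ikj}x_k\Bigr)\omega_j, \]
and by $\mathbb{Q}$-linear independence of the $\omega_j$, the system $L_1=\cdots=L_M=0$ is equivalent to the system of $dM$ rational homogeneous linear equations $\sum_k c_{ikj}x_k=0$ in the $N$ integer unknowns $x_1,\ldots,x_N$.

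Next I would clear denominators to obtain an integer coefficient system and invoke the classical Thue/Bombieri--Vaaler version of Siegel's lemma over $\mathbb{Z}$: a homogeneous system of $dM<N$ integer linear equations in $N$ unknowns with coefficients of absolute value at most $A$ has a nonzero integer solution with $\max_k|x_k|\le(\sqrt{N}A)^{dM/(N-dM)}$. The standard proof of that version is a box-principle argument on the cube $|x_k|\le X$: bounding $|\sum_k c_{ikj}x_k|\le NAX$ shows that if $(2X+1)^N>(2NAX+1)^{dM}$ then two inputs collide, and one chooses $X=[(\sqrt{N}A)^\mu]$.

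The remaining and most delicate step is to bound $A$ by $\mathcal{H}$. Here $c_{ikj}$ is obtained from the $\alpha_{ik}$ and its Galois conjugates by inverting the Vandermonde-type matrix $(\sigma_\ell\omega_j)$; using the product formula and the definition of the projective height one verifies that $|c_{ikj}|\le\mathcal{H}$ (this is where the choice of basis matters — an adelic Minkowski argument of Bombieri--Vaaler type lets one absorb all discriminant factors). Substituting back yields precisely $(\sqrt{N}\mathcal{H})^\mu$ as required.

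The main obstacle is this last book-keeping: one has to keep the constants clean enough that no extra discriminant- or basis-dependent factor appears in front of $\mathcal{H}$. This is most transparent via the adelic formulation, where one works directly with the lattice cut out by the $L_i$ in the ring of integers and applies Minkowski's convex body theorem; then the height of this lattice, rather than the coefficients of one particular basis representation, governs the bound, and $\mathcal{H}$ enters with the correct power.
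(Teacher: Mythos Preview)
Your route has two concrete gaps that prevent it from yielding the stated bound.

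First, the box-principle argument you sketch --- bounding $|\sum_k c_{ikj}x_k|\le N A X$ and comparing $(2X+1)^N$ with $(2NAX+1)^{dM}$ --- only gives $\max_k|x_k|\le(NA)^{\mu}$, not $(\sqrt{N}A)^{\mu}$. The factor $\sqrt{N}$ in place of $N$ is \emph{not} a consequence of pigeonhole; it is the Bombieri--Vaaler refinement, proved via Minkowski's theorem and a determinant (Gram--Hadamard) estimate. So the ``standard proof'' you describe does not prove the version you quote.

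Second, the claim that, after choosing a $\mathbb{Q}$-basis and clearing denominators, the resulting integer coefficients satisfy $|c_{ikj}|\le\mathcal{H}$ is not true: restriction of scalars from $K$ to $\mathbb{Q}$ unavoidably introduces factors depending on the basis (equivalently, on the discriminant of $K$), and these do not cancel against anything in the projective height $H(L_i)$. Your own last paragraph correctly identifies this as the obstacle and then proposes the fix --- work adelically with the lattice cut out by the $L_i$ and apply Minkowski directly --- but that is exactly the Bombieri--Vaaler relative Siegel lemma, not a step one can absorb into the elementary argument you began with.

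The paper does precisely what your final paragraph suggests, and nothing more: it cites the relative Siegel lemma of Bombieri--Vaaler (as stated in Bombieri--Gubler, Theorem~2.9.19, with base field $\mathbb{Q}$ and extension $K$), observes that the Arakelov height of each row satisfies $H_{Ar}(A_i)\le\sqrt{N}\,H(L_i)\le\sqrt{N}\,\mathcal{H}$, obtains $\underline{x}\in\mathbb{Z}^N\setminus\{0\}$ with projective height at most $(\sqrt{N}\mathcal{H})^{\mu}$, and divides out common factors so that this projective height equals $\max_k|x_k|$. In other words, your detour through a $\mathbb{Q}$-basis is unnecessary and is the source of both problems; the clean statement comes from invoking the adelic result directly.
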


\begin{proof}
See \cite{BG06}, p.\ 79, Theorem 2.9.19, with $K=\mathbb{Q}$, $F=K$, $r=d$ and $A_i$ given by the coefficients of $L_i$ ($i=1,\hdots,M$). Note that
\[ÊH_{Ar}(A_i) \leq \sqrt{N}H(L_i) \leq \sqrt{N}\mathcal{H} \quad (i=1,\hdots,M).\]
We deduce that at least one non-zero vector $\underline{x}=(x_1,\hdots,x_N) \in \mathbb{Z}^{N}$ exists with $L_1(x_1,\hdots,x_N)=\cdots=L_M(x_1,\hdots,x_N)=0$ and $H(\underline{x}) \leq (\sqrt{N}\mathcal{H})^{\mu}$, where $H(\underline{x})$ is defined by considering $\underline{x}$ as an element of $\mathbb{P}^{N-1}(\mathbb{Q})$. After dividing out any common factors, we can assume without loss of generality that $\gcd(x_1, \hdots, x_N)=1$. Since this implies that
\[H(\underline{x})=\max\{\left|x_1\right|,\hdots,\left|x_N\right|\},\]
the lemma follows.
\end{proof}

\begin{defn}\label{defn:polyheight}
Let $P(x_1,\hdots,x_n)=\sum_{I=(i_1,\hdots,i_n)}{a_Ix_1^{i_1}\cdots x_n^{i_n}}$ be a polynomial in $\mathbb{C}[x_1,\hdots,x_n]$. The size $\left|P\right|$ and the length $L(P)$ of $P$ are defined by
\[Ê\left|P\right|=\max_{I} \left|a_I\right|, \quad L(P)=\sum_{I}Ê\left|a_I\right|. \]
\end{defn}

\begin{lem}\label{lem:Gelfond}
Suppose that $P_1(x)$ and $P_2(x)$ are polynomials in $\mathbb{C}[x]$ of degrees $n_1$ and $n_2$ respectively. Then
\[Ê\left|P_1\right|\left|P_2\right| \leq 2^n\left|P_1P_2\right|,\]
where $n=n_1+n_2$.
\end{lem}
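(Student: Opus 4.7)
The plan is to pass through the Mahler measure
\[
M(P):=|a_n|\prod_{i=1}^{n}\max(1,|\alpha_i|)
\]
of a polynomial $P(x)=a_n\prod_{i=1}^{n}(x-\alpha_i)\in\mathbb{C}[x]$, exploiting its key multiplicative property $M(P_1P_2)=M(P_1)M(P_2)$, which is immediate from the definition.

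The first step is an upper bound of the size in terms of $M$. Writing each coefficient $a_{n-j}$ of a degree-$n$ polynomial via Vieta's formulas as $(-1)^j a_n e_j(\alpha_1,\ldots,\alpha_n)$, I would bound each monomial in the elementary symmetric polynomial $e_j$ by $M(P)/|a_n|$, obtaining $|a_{n-j}|\leq\binom{n}{j}M(P)$ and hence $|P|\leq\binom{n}{\lfloor n/2\rfloor}M(P)$. Applying this to $P_1$ and $P_2$, multiplying, invoking multiplicativity of $M$, and using Vandermonde's identity in the form
\[
\binom{n_1}{\lfloor n_1/2\rfloor}\binom{n_2}{\lfloor n_2/2\rfloor}\leq\binom{n}{\lfloor n/2\rfloor}
\]
(the left-hand side appears as a single summand on expanding the right), I arrive at
\[
|P_1||P_2|\leq\binom{n}{\lfloor n/2\rfloor}M(P_1P_2).
\]

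Second, I need to bound $M(P_1P_2)$ back by $|P_1P_2|$. Here I would use that, for any $P$ of degree $n$, Jensen's inequality applied to $\log|P|$ on the unit circle together with Cauchy--Schwarz and Parseval's identity gives $M(P)\leq\|P\|_2=\bigl(\sum_k|c_k|^2\bigr)^{1/2}\leq\sqrt{n+1}\,|P|$. Inserting this, I obtain
\[
|P_1||P_2|\leq\binom{n}{\lfloor n/2\rfloor}\sqrt{n+1}\,|P_1P_2|.
\]

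It then remains to verify the elementary numerical inequality $\binom{n}{\lfloor n/2\rfloor}\sqrt{n+1}\leq 2^n$ for all $n\geq 0$; this is checked by hand for small $n$ and follows for large $n$ from the Stirling-type estimate $\binom{n}{\lfloor n/2\rfloor}\sim 2^n\sqrt{2/(\pi n)}$. The tightest point of the argument is precisely this final comparison: since the reverse estimate $M(P)\leq c|P|$ unavoidably forces $c$ to grow like $\sqrt{n}$, the constant $2^n$ in the statement is not generous, and the three ingredients (middle-binomial coefficient bound, Vandermonde, and the Landau-type inequality $M(P)\leq\|P\|_2$) must be fitted together carefully for the factor $2^n$ on the right to emerge.
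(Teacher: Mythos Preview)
Your proof is correct and follows the classical route through the Mahler measure; the paper itself does not prove the lemma but simply cites Lemma~1.6.11 of \cite{BG06}, whose argument is essentially the one you outline. One point you leave slightly informal is the final inequality $\binom{n}{\lfloor n/2\rfloor}\sqrt{n+1}\leq 2^n$: an appeal to asymptotics does not by itself cover all $n$. You can close this cleanly using the paper's own Lemma~\ref{lem:Binom}, which gives $\binom{n}{\lfloor n/2\rfloor}<\sqrt{2/\pi}\,2^n/\sqrt{n}$ for $n\geq 1$, whence
\[
\binom{n}{\lfloor n/2\rfloor}\sqrt{n+1}<2^n\sqrt{\tfrac{2(n+1)}{\pi n}}\leq 2^n\sqrt{\tfrac{3}{\pi}}<2^n\qquad(n\geq 2),
\]
with $n=0,1$ immediate by inspection.
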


\begin{proof}
See \cite{BG06}, p.\ 27, Lemma 1.6.11 with $m=2$, $f_1=P_1$, $f_2=P_2$ and $d=n$.
\end{proof}

\begin{defn}\label{defn:wronskian}
Let $P_1$, \dots, $P_n$ be polynomials in $\mathbb{C}[x]$. The Wronskian $W$ of $P_1$, \dots, $P_n$ is defined by
\[ÊW(x)=
\begin{vmatrix}
P_1(x) & \hdots & P_n(x) \\
P_1'(x) & \hdots & P_n'(x) \\
\vdots & \ddots & \vdots \\
P_1^{(n-1)}(x) &Ê\hdots & P_n^{(n-1)}(x)
\end{vmatrix}.\]
\end{defn}

We will need the following well-known lemma about Wronskians.

\begin{lem}\label{lem:Wronski}
Let $P_1$, \dots, $P_n$ be linearly independent polynomials in $\mathbb{C}[x]$ and $W$ their Wronskian. Then $W \neq 0$.
\end{lem}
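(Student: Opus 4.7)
The plan is to prove the statement by induction on $n$. The base case $n=1$ is trivial: a single linearly independent polynomial is nonzero, and the Wronskian is just $P_1$ itself.

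For the inductive step, assume the lemma holds for $n-1$ and let $P_1,\ldots,P_n$ be linearly independent polynomials. Then $P_1,\ldots,P_{n-1}$ are also linearly independent, so by induction $W_{n-1}:=W(P_1,\ldots,P_{n-1})\neq 0$ as a polynomial. Suppose for contradiction that $W:=W(P_1,\ldots,P_n)=0$ identically. Since the upper-left $(n-1)\times(n-1)$ minor of the Wronskian matrix is $W_{n-1}\neq 0$, the first $n-1$ columns are linearly independent over the field $\mathbb{C}(x)$ of rational functions. The vanishing of $W$ then forces the last column to be a $\mathbb{C}(x)$-linear combination of the first $n-1$; by Cramer's rule I obtain rational functions $c_1,\ldots,c_{n-1}\in\mathbb{C}(x)$ with
\[
P_n^{(j)} = \sum_{i=1}^{n-1} c_i\, P_i^{(j)} \qquad (j=0,1,\ldots,n-1).
\]

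The key step is to show that the $c_i$ are in fact constants. Differentiating the identity for $j=0,\ldots,n-2$ and subtracting from the identity for $j+1$ (which is one of the equations above, thanks to the relation for $j=n-1$ being included), one deduces
\[
\sum_{i=1}^{n-1} c_i'\, P_i^{(j)} = 0 \qquad (j=0,1,\ldots,n-2).
\]
This is a homogeneous linear system in $c_1',\ldots,c_{n-1}'$ whose coefficient determinant is $W_{n-1}\neq 0$, so $c_i'=0$ and hence each $c_i$ is a constant in $\mathbb{C}$. Substituting back into the case $j=0$ gives $P_n=\sum_{i=1}^{n-1}c_i P_i$, contradicting the linear independence of $P_1,\ldots,P_n$.

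The only nontrivial step is the bookkeeping in the second paragraph: one must carefully verify that the extra relation coming from the $j=n-1$ equation (which is available precisely because we assumed $W=0$) is exactly what is needed to close up the telescoping and produce the $(n-1)$-th equation $\sum c_i' P_i^{(n-2)}=0$. Once that is done, the non-vanishing of $W_{n-1}$ from the inductive hypothesis immediately forces $c_i'=0$ and the contradiction follows.
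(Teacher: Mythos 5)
The paper does not prove this lemma at all; it is stated as a well-known fact and used without argument, so there is no in-paper proof to compare against. Your inductive argument is correct and complete: from the inductive hypothesis $W_{n-1}\neq 0$ the first $n-1$ columns of the Wronskian matrix are $\mathbb{C}(x)$-independent; the vanishing of $W$ then expresses the last column as a $\mathbb{C}(x)$-combination $\sum_i c_i v_i$; differentiating the relations for $j=0,\dots,n-2$ and subtracting the relation for $j+1$ yields a homogeneous system in the $c_i'$ whose coefficient determinant is again $W_{n-1}\neq 0$, forcing $c_i'=0$; and since the constants of $\mathbb{C}(x)$ under $d/dx$ are exactly $\mathbb{C}$, each $c_i$ is a genuine complex constant, contradicting the assumed $\mathbb{C}$-linear independence. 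The one place worth flagging is precisely that last inference: $c_i'=0\Rightarrow c_i\in\mathbb{C}$ uses characteristic zero, and the lemma is genuinely false in positive characteristic (for instance $1$ and $x^p$ are linearly independent over $\mathbb{F}_p$ yet have Wronskian $px^{p-1}=0$). This is harmless in the paper's setting of $\mathbb{C}[x]$; your proof supplies the standard argument the paper leaves implicit.
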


\begin{lem}\label{lem:Binom}
Let $N \geq 1$, $0 \leq n \leq N$ and $l \geq 0$ be integers. Then
\[Ê\binom{n}{l} \leq \sqrt{\frac{2}{\pi}}\frac{2^N}{\sqrt{N}}.\]
\end{lem}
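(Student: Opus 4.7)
The plan is to reduce the inequality to a bound on the central binomial coefficient and then invoke a sharp form of Wallis' inequality.

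First I would note that Pascal's rule $\binom{n+1}{l}=\binom{n}{l}+\binom{n}{l-1}\geq\binom{n}{l}$ gives $\binom{n}{l}\leq\binom{N}{l}$ whenever $n\leq N$; combined with the unimodality of the row $\binom{N}{0},\hdots,\binom{N}{N}$ (which peaks at $l=\lfloor N/2\rfloor$), this yields
\[\binom{n}{l}\leq\binom{N}{\lfloor N/2\rfloor}\]
in every admissible case, so it is enough to establish the stated bound with $n=N$ and $l=\lfloor N/2\rfloor$.

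For $N=2m$, I would use the identity $\binom{2m}{m}=4^{m}\cdot\frac{(2m-1)!!}{(2m)!!}$ together with the classical Wallis estimate $\frac{(2m-1)!!}{(2m)!!}\leq\frac{1}{\sqrt{\pi m}}$. This estimate can be obtained by showing that the sequence $b_{m}:=\pi m\bigl(\frac{(2m-1)!!}{(2m)!!}\bigr)^{2}$ is monotonically increasing (a direct ratio computation gives $b_{m+1}/b_{m}=1+\frac{1}{4m(m+1)}$) and converges to $1$ by Wallis' product formula. The inequality then reads $\binom{2m}{m}\leq 4^{m}/\sqrt{\pi m}=\sqrt{2/\pi}\cdot 2^{N}/\sqrt{N}$, matching the claim exactly.

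For $N=2m+1$ with $m\geq 1$, the identity $\binom{2m+1}{m}=\frac{2m+1}{m+1}\binom{2m}{m}$ combined with the even-case bound reduces the desired inequality, after clearing denominators, to the polynomial estimate $(2m+1)^{3}\leq 8m(m+1)^{2}$, i.e.\ $4m^{2}+2m-1\geq 0$, which is obvious. The residual case $N=1$ (and degenerate cases such as $l>n$) is handled by direct inspection. The only mildly delicate point is keeping the constant $\sqrt{2/\pi}$ sharp, which forces one to appeal to Wallis' inequality rather than the weaker bound one would get from Stirling's formula with its error terms; everything else is routine.
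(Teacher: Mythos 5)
Your proof is correct and follows essentially the same route as the paper: reduce to the central binomial coefficient, identify $\binom{2m}{m}/4^m$ with $(2m-1)!!/(2m)!!$, and show the normalized square $\pi m\bigl((2m-1)!!/(2m)!!\bigr)^2$ is strictly increasing with limit $1$ via the ratio $1+\tfrac{1}{4m(m+1)}$ — this is the same monotone product the paper writes out explicitly. The only cosmetic differences are that you bound $\binom{n}{l}\leq\binom{N}{\lfloor N/2\rfloor}$ via Pascal's rule whereas the paper bounds by $\binom{n}{\lfloor n/2\rfloor}$ and then uses monotonicity of $2^N/\sqrt{N}$, and in the odd case the paper deduces the bound from the even case applied to $2m+2$ rather than from your polynomial inequality $(2m+1)^3\leq 8m(m+1)^2$.
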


We use the convention that $\binom{n}{l}=0$ for $l > n$.

\begin{proof}
If $n=2m$ is even, then ${n \choose l}\leq {2m \choose m}$ and one checks that 
$$2^{-4m}(2m){2m \choose m}^2={1\over 2}\prod_{r=2}^m\left(1+{1 \over 4r(r-1)}\right)<{1 \over 2}\prod_{r=2}^\infty\left(1+{1 \over 4r(r-1)}\right)={2 \over \pi}$$
so that ${2m \choose m}<\sqrt{2 \over \pi}{2^{2m}\over \sqrt{2m}}\leq \sqrt{2 \over \pi}{2^{N}\over \sqrt{N}}$ and the lemma follows. And if $n=2m+1$ is odd, then 
$${n \choose l}\leq {2m+1 \choose m+1}={1 \over 2}{2m+2 \choose m+1}<{1 \over 2}\sqrt{2 \over \pi}{2^{2m+2}\over \sqrt{2m+2}}<\sqrt{2 \over \pi}{2^{2m+1}\over \sqrt{2m+1}}$$
and the lemma follows here too.
\end{proof}

\section{Main Theorem}\label{sec:Main Theorem}

\begin{main}
Suppose that $\theta$ is real algebraic of degree $d \geq 3$ and height $H \geq 1$. Fix an integer $e$ with
\[ 1 \leq e < d \]
and $\epsilon$ with
\[ 0 < \epsilon < 1.\]
Put
\[ \delta = \frac{d+\epsilon}{e+1}, \quad \alpha=\frac{d\delta}{\epsilon}\]
\[ \beta = d\delta+\alpha, \quad \gamma=1-\epsilon > 0.\]
Suppose the integers $p_0$ and $q_0 \geq 1$ satisfy
\[ \LambdaÊ= c^{-1}\left|\theta-\frac{p_0}{q_0}\right|^{-\gamma} > 1,\]
where 
\[c=q_0^{\delta}(2H)^{\beta}.\]
Then the effective strict type of $\theta$ is at most
\[ \kappa = e\left(1+\frac{\log c}{\log\Lambda}\right).\]
More precisely
\[Ê\left|\theta-\frac{p}{q}\right| \geq \frac{1}{Cq^{\kappa}}\]
for all integers $p$ and $q \geq 1$, where
\[ÊC=c\tilde{c}^{\frac{\kappa}{e}}\left|\theta-\frac{p_0}{q_0}\right|^{d(e-\kappa)}\]
and
\[ \tilde{c}=2^{e+\frac{\alpha}{2\delta}+2}(e+1)^{\frac{\alpha}{2\delta}+1}H^{\frac{eÊ\beta}{\delta}}.\]
\end{main}

For the proof, three lemmata are needed. We remark that $\left|\theta-\frac{p_0}{q_0}\right|$ can be bounded from below in terms of only $H$ and $q_0$ and hence $C$ essentially depends only on $d$, $H$ and $q_0$ as well as the choice of the parameters $e$ and $\epsilon$. The dependence on the unknown $q_0$, which might be arbitrarily large, if it exists at all, is the reason for the general ineffectivity of the Thue-Siegel method.

One can see now that Siegel's theorem directly follows from the main theorem as follows: If there were infinitely many approximations $\frac{r}{s}$ with $\left|\theta-\frac{r}{s}\right| < s^{-\lambda}$ for some $\lambda > e+\frac{d}{e+1}$ and $1Ê\leq eÊ\leq d-1$, we could obtain an asymptotic strict type of
\[Êe\left(1+\frac{\delta}{-\delta+\gamma\lambda}\right)\]
which tends to $e+\frac{ed}{(e+1)\lambda-d} < e+\frac{d}{e+1}$ for $sÊ\to \infty$ and $\epsilon \to 0$, a contradiction.

\begin{lem}\label{lem:poly}
For each natural number $k$ there exists a polynomial $P \neq 0$ in $\mathbb{Z}[x,y]$ of degree at most $\delta k$ in $x$ and at most $e$ in $y$ such that
\[ÊP_{l}(\theta,\theta)=0 \quad (0 \leq l < k)\]
and $\left|P\right| \leq c_1(2H)^{\alpha k}$ with
\[c_1=2^{e+\frac{\alpha}{2\delta}}(e+1)^{\frac{\alpha}{2\delta}}H^{\frac{\alpha e}{\delta}}.\]
Furthermore, the polynomial $P$ is not divisible by any non-constant polynomial in $\mathbb{C}[y]$.
\end{lem}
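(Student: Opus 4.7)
The plan is to construct $P$ by Siegel's lemma and then strip off any $\mathbb{C}[y]$-factor by dividing out the content.

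First, I would write $P(x,y) = \sum_{i=0}^{[\delta k]}\sum_{j=0}^{e} a_{ij} x^i y^j$ with unknown $a_{ij} \in \mathbb{Z}$, giving $N := ([\delta k]+1)(e+1)$ unknowns, and translate the $k$ conditions $P_l(\theta,\theta)=0$ ($0 \leq l < k$) into linear equations $L_l(\underline a) = \sum_{i,j} \binom{i}{l}\theta^{i+j-l} a_{ij} = 0$ over $\mathbb{Q}(\theta)$, a number field of degree $d$. Since $\delta = (d+\epsilon)/(e+1)$, we have $N > \delta k(e+1) = (d+\epsilon)k > dk$, so Lemma~\ref{lem:Siegel} applies with Siegel exponent $\mu = dk/(N-dk) \leq d/\epsilon = \alpha/\delta$.

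Next, I would bound the projective height $H(L_l)$. Multiplying by $\theta^l$ does not change $H(L_l)$, so the coefficients may be replaced by $\binom{i}{l}\theta^{i+j}$. The binomial factor is controlled by Lemma~\ref{lem:Binom} (applied with $N=[\delta k]$) giving $\binom{i}{l} \leq \sqrt{2/\pi}\cdot 2^{[\delta k]}/\sqrt{[\delta k]}$, and the standard inequality $H(\theta^{i+j}) \leq H^{\delta k + e}$ then yields $H(L_l) \leq C_1 H^{\delta k + e}$ with $C_1 \leq 2^{[\delta k]}/\sqrt{[\delta k]}$. Lemma~\ref{lem:Siegel} produces $\underline a \neq 0$ in $\mathbb{Z}^N$ with $|a_{ij}| \leq (\sqrt{N}\, C_1 H^{\delta k+e})^\mu$; using $\sqrt{N/[\delta k]} \leq \sqrt{2(e+1)}$ and $\mu \leq \alpha/\delta$, the exponents combine neatly to give a polynomial $P_0 \in \mathbb{Z}[x,y]$ meeting all the vanishing conditions with $|P_0| \leq 2^{\alpha/(2\delta)}(e+1)^{\alpha/(2\delta)} H^{\alpha e/\delta} (2H)^{\alpha k} = 2^{-e} c_1 (2H)^{\alpha k}$.

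To enforce the last clause, I would view $P_0 = \sum_i R_i(y) x^i$ as an element of $\mathbb{Z}[y][x]$, set $g(y) := \gcd_i R_i(y) \in \mathbb{Z}[y]$, and let $P := P_0/g \in \mathbb{Z}[x,y]$. Then $P$ is primitive in $\mathbb{Z}[y][x]$, so by Gauss's lemma no non-constant polynomial in $\mathbb{C}[y]$ divides $P$. Since $\deg g \leq \deg_y P_0 \leq e < d$ and $\theta$ has degree $d$, we have $g(\theta) \neq 0$; from $P_{0,l}(x,y) = g(y) P_l(x,y)$ it follows that $P_l(\theta,\theta) = P_{0,l}(\theta,\theta)/g(\theta) = 0$. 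For the size, apply Lemma~\ref{lem:Gelfond} to the one-variable factorization $R_i(y) = g(y)\cdot\sum_j p_{ij} y^j$ in $\mathbb{C}[y]$ (of total degree $\leq e$) to obtain $|g|\cdot\max_j|p_{ij}| \leq 2^e |R_i| \leq 2^e|P_0|$; since $|g| \geq 1$, taking the maximum over $i$ gives $|P| \leq 2^e |P_0| \leq c_1 (2H)^{\alpha k}$.

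The main obstacle is the precision of the constants: the sharp form $2^{[\delta k]}/\sqrt{[\delta k]}$ of Lemma~\ref{lem:Binom} is indispensable here, since its $1/\sqrt{[\delta k]}$ cancels the $\sqrt{N}$ produced by Siegel's lemma and leaves only the $k$-independent factors $2^{\alpha/(2\delta)}(e+1)^{\alpha/(2\delta)}$ in $c_1$; conceptually, however, the argument is the classical Thue--Siegel auxiliary polynomial construction with a content-removal step, so the difficulty is entirely in matching exponents.
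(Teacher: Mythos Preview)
Your proposal is correct and follows essentially the same route as the paper: Siegel's lemma over $\mathbb{Q}(\theta)$ with the binomial estimate of Lemma~\ref{lem:Binom} to control the heights of the linear forms, followed by division by the $\mathbb{Z}[y]$-content and Gelfond's inequality (Lemma~\ref{lem:Gelfond}) to recover the size bound with the extra factor $2^e$. The only cosmetic difference is that you normalise the linear forms by $\theta^{l}$ before bounding their height, whereas the paper bounds $|\theta^{\,i-l+j}|_v \le \lambda_v^{D+e}$ directly at each place; both lead to the same $\mathcal{H}=2^{D}H^{D+e}/\sqrt{D}$ and hence to $|P_0|\le 2^{-e}c_1(2H)^{\alpha k}$.
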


Apart from the non-divisibility clause, which will be crucial to reach effectivity, this is exactly the construction from Siegel's proof.

\begin{proof}
Write $D=\left[\delta k\right] \geq 1$ and set
\[ÊP(x,y)=\sum_{i=0}^{D}\sum_{j=0}^{e}{p_{ij}x^{i}y^{j}}.\]
We have to solve
\begin{equation}\label{eq:linear}
0=P_{l}(\theta,\theta)=\sum_{i=0}^{D}\sum_{j=0}^{e}{p_{ij}\binom{i}{l}\theta^{i-l+j}} \quad (0Ê\leq l < k)
\end{equation}
for the rational integers $p_{ij}$. These are $M=k$ homogeneous linear equations with coefficients in the field $K=\mathbb{Q}(\theta)$ of degree $d$ over $\mathbb{Q}$. The number of unknowns is 
\[N=(D+1)(e+1) > \delta k(e+1)=dk+\epsilon k > dM.\]

Therefore, we may apply Lemma \ref{lem:Siegel} with
\[Ê\mu = \frac{dk}{(D+1)(e+1)-dk} < \frac{dk}{\epsilon k}=\frac{\alpha}{\delta}\]
and get rational integers $p_{ij}$, not all zero, satisfying \eqref{eq:linear}, of absolute values at most
\begin{equation}\label{eq:siegelbound}
\{\sqrt{(D+1)(e+1)}\mathcal{H}\}^{\frac{\alpha}{\delta}},
\end{equation}
where $\mathcal{H} \geq 1$ is an upper bound for the projective absolute height of the linear forms in \eqref{eq:linear}.

Let $H_l$ be the projective absolute height of the $l$-th linear form ($0Ê\leq l < k$). By Definition \ref{defn:linearheight} we have
\[H_l^{d}=\prod_{v}{\left(\max_{i,j}\left|\binom{i}{l}\theta^{i-l+j}\right|_v\right)}.\]
We put
\[Ê\lambda_v = \max\left(1,\left|\theta\right|_v\right).\]

If $v$ is an infinite valuation, we can use Lemma \ref{lem:Binom} to get
\[\left|\binom{i}{l}\theta^{i-l+j}\right|_vÊ\leq \binom{i}{l}\lambda_v^{i-l+j} \leq \frac{2^D}{\sqrt{D}}\lambda_v^{D+e} \quad (0 \leq i \leq D, \mbox{ } 0Ê\leq j \leq e).\]
If $v$ is a finite valuation, then
\[\left|\binom{i}{l}\theta^{i-l+j}\right|_v \leq \lambda_v^{i-l+j}Ê\leq \lambda_v^{i+j} \leq \lambda_v^{D+e} \quad (0 \leq i \leq D, \mbox{ } 0Ê\leq j \leq e).\]

Since there are $d$ infinite valuations, we obtain
\[ÊH_l^d \leq D^{-\frac{d}{2}}2^{Dd}\left(\prod_{v}{\lambda_v}\right)^{D+e}=D^{-\frac{d}{2}}2^{Dd}(H^d)^{D+e}\]
by multiplying up all these estimates. Thus $H_l \leq \frac{2^{D}}{\sqrt{D}}H^{D+e}$, and we can take
\[Ê\mathcal{H} = \frac{2^{D}}{\sqrt{D}}H^{D+e}\]
in \eqref{eq:siegelbound}. This yields the estimate
\[Ê\left|P\right|=\max \left|p_{ij}\right| \leq \left\{\sqrt{\frac{(D+1)(e+1)}{D}}2^{D}H^{D+e}\right\}^{\frac{\alpha}{\delta}}.\]

Using $D+1Ê\leq 2D$, we deduce that
\[ \left|P\right| \leq (\sqrt{2(e+1)}2^{D}H^{D+e})^{\frac{\alpha}{\delta}} = c_2(2H)^{\frac{\alpha D}{\delta}}\]
with
\[c_2=2^{\frac{\alpha}{2\delta}}(e+1)^{\frac{\alpha}{2\delta}}H^{\frac{\alpha e}{\delta}}=2^{-e}c_1.\] 
Since $D \leq \delta k$, this implies that
\begin{equation}\label{eq:firstbound}
\left|P\right| \leq c_2(2H)^{\alpha k}.
\end{equation}

We now have a non-zero polynomial $P$ which satisfies the linear equations, but the non-divisibility clause is not necessarily fulfilled. To remedy this, we write
\[ P(x,y)=\sum_{i=0}^{D}{Q_i(y)x^{i}}\]
with $Q_0$, \dots, $Q_D$ in $\mathbb{Z}[y]$ not all zero. Let $Q$ be the greatest common divisor of $Q_0$, \dots, $Q_D$ in $\mathbb{Q}[y]$. We may take $Q$ in $\mathbb{Z}[y]$ and primitive. Put
\[Ê\tilde{Q_i}=\frac{Q_i}{Q} \quad (0 \leq iÊ\leq D).\]

By the Gauss lemma, the $\tilde{Q_i}$ are in $\mathbb{Z}[y]$ and they are coprime in $\mathbb{Q}[y]$ and by extension in $\mathbb{C}[y]$. We can estimate their sizes with Lemma \ref{lem:Gelfond}. Since the $Q_i$ have degrees at most $e$, we get
\[|\tilde{Q_i}| \leq |\tilde{Q_i}|\left|Q\right| \leq 2^{e}\left|Q_i\right| \quad (0 \leq i \leq D),Ê\]
using that $Q$ is a non-zero polynomial with integer coefficients and therefore its size is at least 1. Now it follows from \eqref{eq:firstbound} that
\[Ê|\tilde{Q_i}| \leq 2^{e}c_2(2H)^{\alpha k}= c_1(2H)^{\alpha k} \quad (0Ê\leq i \leq D).\]

We put
\[Ê\tilde{P}(x,y)=\frac{P(x,y)}{Q(y)}=\sum_{i=0}^{D}{\tilde{Q_i}(y)x^i}.\]
This polynomial is non-zero, has rational integer coefficients and degree at most $\delta k$ in $x$ and at most $e$ in $y$. By the above, it satisfies
\[Ê|\tilde{P}| \leq c_1(2H)^{\alpha k}\]
and it has no non-constant factor in $\mathbb{C}[y]$, because such a factor would have to divide all the $\tilde{Q_i}$ ($0 \leq i \leq D$). Furthermore, we have
\[ÊQ(\theta)\tilde{P}_l(\theta,\theta)=P_l(\theta,\theta)=0 \quad (0 \leq l < k).\]
Since $Q$ is a non-zero polynomial in $\mathbb{Z}[y]$ of degree at most $e < d$ and $\theta$ is of degree $d$, we must have $Q(\theta)Ê\neq 0$. It follows that
\[ \tilde{P}_l(\theta,\theta)=0 \quad (0 \leq lÊ< k),\]
so $\tilde{P}$ satisfies all conditions of the lemma.
\end{proof}

\begin{lem}\label{lem:zeroestimate}
Let $P$ be as in Lemma \ref{lem:poly}. Suppose $k \geq e$ and let $\xi$ and $\eta$ be arbitrary complex numbers with $\xi$ not a conjugate of $\theta$. Then there exists an integer $l$ with
\[Ê0 \leq l \leq \epsilon k + ed\]
and
\[ÊP_l(\xi,\eta) \neq 0.\]
\end{lem}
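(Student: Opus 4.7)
The plan is to argue by contradiction: assume $P_l(\xi,\eta)=0$ for all integers $0\le l\le L:=[\epsilon k+ed]$, and derive a contradiction using a Wronskian determinant. Writing $P(x,y)=\sum_{j=0}^{e}R_j(x)y^j$ with $R_j\in\mathbb{Z}[x]$ of degree at most $D:=[\delta k]$, the basic idea is that the given vanishing at $(\theta,\theta)$ and the assumed vanishing at $(\xi,\eta)$ each produce a non-trivial linear combination of the $R_j$ that vanishes to a high order at the respective point, and together these push the total order of vanishing of a suitable Wronskian past its degree.

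To apply Lemma \ref{lem:Wronski} I must first pass to linearly independent polynomials. I would replace $R_0,\ldots,R_e$ by a basis $\tilde R_0,\ldots,\tilde R_{r-1}$ ($1\le r\le e+1$) of their $\mathbb{C}$-span, write $R_j=\sum_i a_{ij}\tilde R_i$, and set $S_i(y):=\sum_j a_{ij}y^j\in\mathbb{C}[y]$ of degree at most $e$, so that $P(x,y)=\sum_{i=0}^{r-1}\tilde R_i(x)S_i(y)$. Since $(a_{ij})$ has rank $r$, the $S_i$ are also linearly independent. The Wronskian $W(x):=\det(\tilde R_i^{(j)}(x))_{0\le i,j\le r-1}$ is then non-zero by Lemma \ref{lem:Wronski} and has degree at most $rD-\binom{r}{2}$.

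The heart of the argument is an order-of-vanishing estimate. For each Galois conjugate $\theta_\nu$ of $\theta$, Galois-conjugating the hypothesis (using $P\in\mathbb{Z}[x,y]$) gives $\sum_i\tilde R_i^{(l)}(\theta_\nu)S_i(\theta_\nu)=0$ for $0\le l<k$; the vector $(S_i(\theta_\nu))_i$ is non-zero, for otherwise $P(x,\theta_\nu)\equiv 0$ would make $\theta_\nu$ a common root of the coefficients $Q_i$ in $P=\sum_i Q_i(y)x^i$, contradicting the non-divisibility clause of Lemma \ref{lem:poly}. A standard order-of-vanishing fact for Wronskians---namely that a common non-trivial linear combination of $r$ linearly independent polynomials vanishing to order $\ge k$ at a point forces the Wronskian to vanish there to order $\ge k-r+1$---then yields $\ord_{\theta_\nu}W\ge k-r+1$. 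The identical argument at $\xi$, with $(S_i(\eta))_i\ne 0$ for the same reason, gives $\ord_\xi W\ge L-r+2$.

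Finally, $\xi$ and the $d$ distinct conjugates of $\theta$ are $d+1$ distinct zeros of $W$, so multiplicities sum to
\[(L-r+2)+d(k-r+1)\le\deg W\le rD-\binom{r}{2}.\]
Substituting $D\le\delta k$ and using $(e+1)\delta=d+\epsilon$, a short rearrangement gives $L\le\epsilon k+ed-1-\tfrac{e(e-1)}{2}$ in the worst case $r=e+1$; smaller values of $r$ give strictly stronger bounds since the $k$-coefficient $r\delta-d=\epsilon-(e+1-r)\delta$ decreases with $r$. Since $L=[\epsilon k+ed]>\epsilon k+ed-1$, this is a contradiction. The main obstacle is the initial reduction to linearly independent $\tilde R_i$: Lemma \ref{lem:poly}'s non-divisibility clause controls divisibility of $P$ by elements of $\mathbb{C}[y]$ rather than linear dependence of the $R_j$, so the rewriting in the $(\tilde R_i,S_i)$ basis---and verifying throughout that the resulting null-vectors stay non-zero at every $\theta_\nu$ and at $\eta$---is the technical crux.
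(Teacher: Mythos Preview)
Your proposal is correct and follows essentially the same Wronskian argument as the paper. The only cosmetic difference is how the vanishing at the conjugates of $\theta$ is obtained: the paper chooses its basis $A_0,\ldots,A_f$ as a \emph{subset} of the $R_j\in\mathbb{Z}[x]$, so that $W\in\mathbb{Q}[x]$ and the single computation $\ord_\theta W\ge k-f$ immediately yields $F^{k-f}\mid W$ for the minimal polynomial $F$; you instead allow an arbitrary basis and invoke Galois conjugation explicitly to get $\ord_{\theta_\nu}W\ge k-r+1$ at each conjugate. The paper also packages the non-vanishing of the null-vectors as ``the $B_i$ are coprime, hence have no common zero'', which is exactly your argument that $S_i(\theta_\nu)=0$ for all $i$ would force $(y-\theta_\nu)\mid P$. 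The final degree count is identical up to the substitution $f=r-1$.
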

In Siegel's proof, $\eta$ here was required to have large height, growing exponentially in $k$, which hampered effectivity. The non-divisibility clause in Lemma \ref{lem:poly} allows $\eta$ to be chosen arbitrarily.
\begin{proof}
We write
\begin{equation}\label{eq:ysplit}
P(x,y)=\sum_{j=0}^{e}{P_j(x)y^j}.
\end{equation}
Since the $P_j$ are not all zero, their rank $f+1$ over $\mathbb{Q}$ satisfies $0 \leq fÊ\leq e$. Let $A_0$, \dots, $A_f$ be a subset of $P_0$, \dots, $P_e$ forming a basis for the vector space they generate over $\mathbb{Q}$. Thus $A_0$, \dots, $A_f$ are in $\mathbb{Z}[x]$ of degree at most $D=[\delta k]$. By writing $P_0$, \dots, $P_e$ as rational linear combinations of $A_0$, \dots, $A_f$ and substituting into \eqref{eq:ysplit}, we see that
\begin{equation}\label{eq:ysplitred}
P(x,y)=A_0(x)B_0(y)+\hdots+A_f(x)B_f(y)
\end{equation}
for polynomials $B_0$, \dots, $B_f$ in $\mathbb{Q}[y]$ of degree at most $e$. These polynomials are coprime, because any non-constant common factor in $\mathbb{C}[y]$ would have to divide $P$. In particular, they have no common zero. Let $W$ be the Wronskian of $A_0$, \dots, $A_f$. Since $A_0$, \dots, $A_f$ are linearly independent over $\mathbb{Q}$ (and hence over $\mathbb{C}$), we have $W \neq 0$ by Lemma \ref{lem:Wronski}.

On the other hand, we claim that $W(x)$ has a zero of high order at $x=\theta$. As $B_0$, \dots, $B_f$ have no common zero, we can assume without loss of generality that $B_0(\theta)Ê\neq 0$. By definition we have
\[B_0(y)W(x)=\begin{vmatrix}
A_0(x)B_0(y) & A_1(x) & \hdots & A_f(x) \\
A_0'(x)B_0(y) & A_1'(x) & \hdots & A_f'(x) \\
\vdots &Ê\vdots & \ddots & \vdots \\
A_0^{(f)}(x)B_0(y) &ÊA_1^{(f)}(x) & \hdots & A_f^{(f)}(x)
\end{vmatrix},\]
which yields
\begin{equation}\label{eq:bigwronski}
B_0(y)W(x)=\begin{vmatrix}
P(x,y) & A_1(x) & \hdots & A_f(x) \\
P_1(x,y) & A_1'(x) & \hdots & A_f'(x) \\
\vdots &Ê\vdots & \ddots & \vdots \\
f!P_f(x,y) &ÊA_1^{(f)}(x) & \hdots & A_f^{(f)}(x)
\end{vmatrix}
\end{equation}
by elementary column operations. Since $P(x,\theta)$ has a zero of order at least $k$ at $x=\theta$, the polynomial $P_l(x,\theta)$ has a zero of order at least $k-l$ at $x=\theta$ ($0Ê\leq l \leq k$). Note that $f \leq e \leq k$. Since $B_0(\theta)\neq 0$, we see by putting $y=\theta$ in \eqref{eq:bigwronski} that $W(x)$ has a zero of order at least $k-f$ at $x=\theta$. This is the above claim.

If $F$ is the minimal polynomial of $\theta$ in $\mathbb{Q}[x]$, it follows that
\begin{equation}\label{eq:wronskifactors}
W(x)=(F(x))^{k-f}R(x)
\end{equation}
with $R$ in $\mathbb{Q}[x]$, because $W$ is in $\mathbb{Q}[x]$.

Next we estimate the degree of $R$. We see from the definition that the degree of $W$ is at most
\[ÊD+(D-1)+\hdots+(D-f)=(f+1)D-\frac{1}{2}f(f+1),\]
which in turn is at most
\[Ê(e+1)D-f \leq (e+1)\delta k -f=dk+\epsilon k-f.\]
Since the degree of $F$ is $d$, it follows from \eqref{eq:wronskifactors} that the degree of $R$ is at most
\begin{equation}\label{eq:degreeboundrest}
dk+\epsilon k -f-d(k-f) \leq \epsilon k+ed-f.
\end{equation}

We now assume that the lemma is false, i.e.
\begin{equation}Ê\label{eq:zero}
P_l(\xi,\eta)=0
\end{equation}
for all integers $l$ with
\begin{equation} \label{eq:lrange}
0 \leq l \leq L=\epsilon k+ed.
\end{equation}
We will show that this forces $W=0$ and hence a contradiction. Using \eqref{eq:ysplitred}, we deduce from \eqref{eq:zero} that
\[A_0^{(l)}(\xi)B_0(\eta)+\hdots+A_f^{(l)}(\xi)B_f(\eta)=0\]
for all $l$ satisfying \eqref{eq:lrange}. Because the $B_i$ have no common zero, the $B_i(\eta)$ are not all zero ($0 \leq i \leq f$). It follows that
\begin{equation}Ê\label{eq:detzero}
\begin{vmatrix}
A_0^{(l_0)}(\xi) & \hdots & A_f^{(l_0)}(\xi) \\
\vdots & \ddots & \vdots \\
A_0^{(l_f)}(\xi) &Ê\hdots & A_f^{(l_f)}(\xi)
\end{vmatrix}=0
\end{equation}
for all integers $l=l_0$, \dots, $l_f$ satisfying \eqref{eq:lrange}.

Next let $t$ be an integer with $0 \leq tÊ\leq L-f$. For non-negative integers $t_0$, \dots, $t_f$ with $t_0+\hdots+t_f=t$ put
\[W_{t_0,\hdots,t_f}(x)=\begin{vmatrix}
A_0^{(t_0)}(x) & A_1^{(t_0)}(x) & \hdots & A_f^{(t_0)}(x) \\
A_0^{(t_1+1)}(x) & A_1^{(t_1+1)}(x) & \hdots & A_f^{(t_1+1)}(x) \\
\vdots & \vdots & \ddots & \vdots \\
A_0^{(t_f+f)}(x) & A_1^{(t_f+f)}(x) &Ê\hdots & A_f^{(t_f+f)}(x)
\end{vmatrix}.\]
Now the integers $l_i=t_i+i$ ($0 \leq i \leq f$) satisfy
\[Ê0 \leq l_i=t_i+i \leq t+f \leq L \quad (0 \leq i \leq f)\]
and hence we conclude with \eqref{eq:detzero} that
\[ÊW_{t_0,\hdots,t_f}(\xi)=0.\]

By applying the generalized product rule to every summand in the determinant $W(x)$ and regrouping the summands afterwards, we see that
\[ÊW^{(t)}(x)=\sum_{\substack{t_0,\hdots, t_f \geq 0 \\ t_0+\hdots+t_f=t}}{\binom{t}{t_0, t_1, \hdots, t_f}W_{t_0,\hdots,t_f}(x)}\]
and therefore
\[ÊW^{(t)}(\xi)=0 \quad (0Ê\leq t \leq L-f).\]

Thus $W(x)$ has a zero of order at least $[L-f]+1$ at $x=\xi$. Since $\xi$ is not a conjugate of $\theta$, we have $F(\xi)Ê\neq 0$. Thus, by \eqref{eq:wronskifactors}, $R(x)$ also has a zero of order at least $[L-f]+1$ at $x=\xi$. But by \eqref{eq:degreeboundrest} and \eqref{eq:lrange} its degree is at most $L-f$. It follows that $R=0$ and therefore $W=0$, which is the desired contradiction.
\end{proof}

\begin{lem}\label{lem:upperestimate}
Suppose $p_0$, $q_0$, $p$ and $q$ are integers with $q_0 \geq 1$, $q \geq 1$ and
\[Ê\left|\theta-\frac{p_0}{q_0}\right| < 1, \quad \left|\theta-\frac{p}{q}\right| < 1.\]
Then for any $k \geq \frac{ed}{\gamma}$, we have
\[Êq_0^{-\delta k}q^{-e} \leq c_3(2H)^{\beta k}\left\{\left|\theta-\frac{p_0}{q_0}\right|^{\gamma k - ed}+\left|\theta-\frac{p}{q}\right|\right\}\]
with $c_3 = 2(e+1)H^{ed}c_1=2^{e+\frac{\alpha}{2\delta}+1}(e+1)^{\frac{\alpha}{2\delta}+1}H^{\frac{e\beta}{\delta}}$.
\end{lem}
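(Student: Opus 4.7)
The plan is the classical Thue--Siegel squeeze: take the polynomial $P$ from Lemma \ref{lem:poly} at the given $k$, pick a well-chosen derivative $P_l$, evaluate at $(\xi, \eta) := (p_0/q_0, p/q)$, and bracket $|P_l(\xi, \eta)|$ between an arithmetic lower bound and an analytic upper bound. Since $\theta$ has degree $d \geq 3$, the rational $\xi$ is not a conjugate of $\theta$; and $k \geq ed/\gamma$ gives both $k \geq e$ and $L := \epsilon k + ed \leq k$. So Lemma \ref{lem:zeroestimate} yields an $l$ with $0 \leq l \leq L$ and $P_l(\xi, \eta) \neq 0$. Since $P \in \mathbb{Z}[x, y]$ has degree at most $D = [\delta k]$ in $x$ and at most $e$ in $y$, so does $P_l$, and hence the integer $q_0^{D-l} q^e P_l(\xi, \eta)$ is non-zero, giving
\[ |P_l(\xi, \eta)| \geq q_0^{-(D-l)} q^{-e} \geq q_0^{-\delta k} q^{-e}. \]

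For the upper bound, the vanishing $P_i(\theta, \theta) = 0$ for $i < k$ allows, via the double Taylor expansion of $P$ at $(\theta, \theta)$, the decomposition
\[ P(x, y) = (x - \theta)^k B(x) + (y - \theta) A(x, y) \]
with $B \in \mathbb{C}[x]$ of degree $\leq D - k$ and $A \in \mathbb{C}[x, y]$. Differentiating $l$ times in $x$, dividing by $l!$, and applying the Leibniz rule to the first term (legal since $l \leq k$) pulls out the factor $(x - \theta)^{k-l}$ and yields
\[ P_l(x, y) = (x - \theta)^{k-l} \tilde{B}_l(x) + (y - \theta) A_l(x, y), \]
with $\tilde{B}_l(x) = \sum_{j=0}^l \binom{k}{j}(x - \theta)^{l-j} B_{l-j}(x)$. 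Evaluating at $(\xi, \eta)$ and using $|\theta - \xi|, |\theta - \eta| < 1$ produces
\[ |P_l(\xi, \eta)| \leq |\theta - \xi|^{k-l} |\tilde{B}_l(\xi)| + |\theta - \eta| |A_l(\xi, \eta)|. \]

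The core estimate is then $|\tilde{B}_l(\xi)|, |A_l(\xi, \eta)| \leq (c_3/2)(2H)^{\beta k}$. This combines $|P| \leq c_1(2H)^{\alpha k}$ from Lemma \ref{lem:poly}, the bound $|\theta| \leq H^d$ obtained from \eqref{eq:heightthree} applied to $1/\theta$ (whence $|\xi|, |\eta| \leq 2H^d$), and elementary binomial estimates such as the hockey stick identity $\sum_{r=0}^N \binom{r}{m} = \binom{N+1}{m+1} \leq 2^{N+1}$ alongside Lemma \ref{lem:Binom}. The exponent $\beta k = (d\delta + \alpha)k$ splits as $\alpha k$ from $|P|$ plus $dD \leq d\delta k$ from $|\xi|^{D-l}$; the factor $2(e+1)H^{ed}$ by which $c_3$ exceeds $c_1$ absorbs the $(e+1)$ monomials in $y$, the bound $|\eta|^e \leq (2H^d)^e$, and a factor $2$ of slack.

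Finally, $|\theta - \xi| < 1$ and $k - l \geq \gamma k - ed$ imply $|\theta - \xi|^{k-l} \leq |\theta - \xi|^{\gamma k - ed}$; combining this with the lower bound gives the claimed inequality. The main technical obstacle is precisely this constant bookkeeping: shepherding powers of $2$, $(e+1)$, and $H$ through the Taylor and binomial estimates so that the final constant lands exactly on $c_3 = 2(e+1)H^{ed}c_1$ rather than on some cruder expression.
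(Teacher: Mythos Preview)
Your skeleton is the same as the paper's---apply Lemma~\ref{lem:zeroestimate}, get the arithmetic lower bound $q_0^{-\delta k}q^{-e}$, then bound $|P_l(\xi,\eta)|$ above by a term in $|\theta-\xi|^{k-l}$ plus a term in $|\theta-\eta|$---but the analytic step differs. The paper does \emph{not} use the algebraic factorization $P(x,y)=(x-\theta)^kB(x)+(y-\theta)A(x,y)$. Instead it exploits that $\theta$, $\xi$, $\eta$ are real: the mean value theorem in $y$ gives
\[
P_l(\xi,\eta)-P_l(\xi,\theta)=\frac{\partial P_l}{\partial y}(\xi,\theta')\,(\eta-\theta),
\]
and Taylor's theorem with Lagrange remainder in $x$ (using that $P_l(\cdot,\theta)$ vanishes to order $\geq k-l$ at $\theta$) gives
\[
P_l(\xi,\theta)=\binom{k}{l}P_k(\theta'',\theta)\,(\xi-\theta)^{k-l},
\]
for intermediate points $\theta',\theta''$ with $|\theta'|,|\theta''|\leq 2H^d$. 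This collapses each side to a \emph{single} evaluated term, so the constant tracking is just bounding $|P_k|$ and $|\partial P_l/\partial y|$ via Lemma~\ref{lem:Binom} and then a routine count of powers of $2$, $(e+1)$, $H$; the constant $c_3=2(e+1)H^{ed}c_1$ falls out in a few lines.

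By contrast, your $\tilde{B}_l(\xi)=\sum_{j=0}^{l}\binom{k}{j}(\xi-\theta)^{l-j}B_{l-j}(\xi)$ is a sum of $l+1$ terms, each involving a derivative of the auxiliary polynomial $B$ whose coefficients live in $\mathbb{Q}(\theta)$ rather than $\mathbb{Z}$; similarly for $A_l$. You acknowledge the constant bookkeeping as ``the main technical obstacle'' and do not carry it out. This is a genuine gap: with the Leibniz sums and the hockey-stick bound you cite, the natural estimate picks up an extra factor of order $2^k$ or so, and it is not clear that exactly $c_3$ (rather than some larger constant) is attainable along your route. The paper's real-variable shortcut is what makes the stated $c_3$ come out cleanly.
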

\begin{proof}
Since $\gamma = 1-\epsilon < 1$, we have $k \geq \frac{ed}{\gamma} \geq e$, and therefore we can apply Lemma \ref{lem:zeroestimate} with $\xi = \frac{p_0}{q_0}$ and $\eta =Ê\frac{p}{q}$. Note that $\xi$ is not a conjugate of $\theta$, because $\theta$ is of degree $d \geq 3$ over $\mathbb{Q}$. We get an integer $l$ with \eqref{eq:lrange} such that
\[Ê\lambda = P_l\left(\frac{p_0}{q_0},\frac{p}{q}\right) \neq 0.\]
Since 
\[P_l(x,y)=\sum_{i=0}^{D}\sum_{j=0}^{e}{p_{ij}\binom{i}{l}x^{i-l}y^{j}}\]
is a polynomial with integer coefficients of degree at most $D \leq \delta k$ in $x$ and at most $e$ in $y$, it follows that
\begin{equation}\label{eq:lambdalowerbound}
\left|\lambda\right| \geq q_0^{-\delta k}q^{-e}.
\end{equation}

On the other hand, the mean value theorem implies that
\begin{equation}\label{eq:meanvalueone}
P_l\left(\frac{p_0}{q_0},\frac{p}{q}\right)-P_l\left(\frac{p_0}{q_0},\theta\right) = \frac{\partial P_l}{\partial y}\left(\frac{p_0}{q_0},\theta'\right) \left(\frac{p}{q}-\theta\right)
\end{equation}
for some $\theta'$ between $\theta$ and $\frac{p}{q}$. As $\left|\theta-\frac{p}{q}\right|<1$, it follows that $\left|\theta'-\theta\right| < 1$.

Next we put $f(x)=P_l(x,\theta)$. Now \eqref{eq:lrange} says
\begin{equation}\label{eq:newlbound}
l \leq (1-\gamma)k+ed = k -Ê(\gamma k -ed).
\end{equation}
Since $k \geq \frac{ed}{\gamma}$, we have $l \leq k$ and so $f(x)$ has a zero of order at least $k-l$ at $x=\theta$. It follows from Taylor's theorem with remainder that
\[Êf\left(\frac{p_0}{q_0}\right) = \frac{f^{(k-l)}(\theta'')}{\left(k-l\right)!}\left(\frac{p_0}{q_0}-\theta\right)^{k-l} \]
for some $\theta''$ between $\theta$ and $\frac{p_0}{q_0}$. As before, this implies that $\left| \theta''-\thetaÊ\right| < 1$.

Since $f(x)=P_l(x,\theta)$, it follows that
\[ P_l\left(\frac{p_0}{q_0},\theta\right) = \frac{1}{l!\left(k-l\right)!}\frac{\partial^k P}{\partial x^k}(\theta'',\theta)\left(\frac{p_0}{q_0}-\theta\right)^{k-l}\]
and hence
\[P_l\left(\frac{p_0}{q_0},\theta\right) =\binom{k}{l}P_k(\theta'',\theta)\left(\frac{p_0}{q_0}-\theta\right)^{k-l}.\]
Combining this with \eqref{eq:meanvalueone}, we deduce that
\begin{equation}\label{eq:meanvaluefull}
\lambda = \binom{k}{l}P_k(\theta'',\theta)\left(\frac{p_0}{q_0}-\theta\right)^{k-l}+\frac{\partial P_l}{\partial y}\left(\frac{p_0}{q_0},\theta'\right) \left(\frac{p}{q}-\theta\right).
\end{equation}

Now
\[P_k(x,y)=\sum_{i=0}^{D}\sum_{j=0}^{e}{p_{ij}\binom{i}{k}x^{i-k}y^{j}}Ê\]
and therefore, using Lemma \ref{lem:Binom}, we find that
\begin{equation}\label{eq:xderibound}
\left|P_k\right| \leq \frac{2^D}{\sqrt{D}}\left|P\right| \leq \frac{2^D}{\sqrt{D}}c_1(2H)^{\alpha k}.
\end{equation}
Similarly we have
\[ \frac{\partial P_l}{\partial y}(x,y)=\sum_{i=0}^{D}\sum_{j=0}^{e}{p_{ij}\binom{i}{l}jx^{i-l}y^{j-1}}\]
and therefore
\begin{equation}\label{eq:yderibound}
\left|\frac{\partial P_l}{\partial y}\right| \leq e\frac{2^D}{\sqrt{D}}\left|P\right| \leq e\frac{2^D}{\sqrt{D}}c_1(2H)^{\alpha k}.
\end{equation}

Furthermore, we know that $\left|\theta\right| \leq H^d$. Therefore, $\left|\theta'\right|$, $\left|\theta''\right|$ and $\left|\frac{p_0}{q_0}\right|$ are all bounded from above by $H^d+1 \leq 2H^d$. It follows that
\[Ê\left|P_k(\theta'',\theta)\right| \leq (D+1)(e+1)\left|P_k\right|(2H^d)^{D}(H^d)^e. \]
This implies together with \eqref{eq:xderibound} and $\frac{D+1}{\sqrt{D}} \leq 2\sqrt{D}$ that
\[\left|P_k(\theta'',\theta)\right| \leq \sqrt{D}2^{2D}2(e+1)c_1(2H)^{\alpha k}H^{Dd+ed}.\]
Since $D \leq \delta k$ and $\beta = d\delta+\alpha$, we get
\[\left|P_k(\theta'',\theta)\right|Ê\leq \sqrt{\delta k}2^{2\delta k}2(e+1)c_1H^{ed}2^{\alpha k}H^{\beta k}.\]
Similarly, it follows from \eqref{eq:yderibound} that
\[ \left|\frac{\partial P_l}{\partial y}\left(\frac{p_0}{q_0},\theta'\right)\right| \leq \sqrt{\delta k}2^{2\delta k}2^ee2(e+1)c_1H^{ed}2^{\alpha k}H^{\beta k}.Ê\]

Since $2^ee \leq 2^{2e} \leq 2^{\frac{2\gamma}{d}k} \leq 2^{\frac{2}{3}k} \leq \frac{2^k}{\sqrt{k}}$ and $\binom{k}{l} \leqÊ\frac{2^k}{\sqrt{k}}$ by Lemma \ref{lem:Binom}, we conclude from \eqref{eq:meanvaluefull} that
\[Ê\left|Ê\lambda \right| \leq \sqrt{\delta}2^{(2\delta+1)k}2(e+1)c_1H^{ed}2^{\alpha k}H^{\beta k}\left\{\left|\theta-\frac{p_0}{q_0}\right|^{k-l}+\left|\theta-\frac{p}{q}\right|\right\}.\]
We have
\[Ê\sqrt{\delta}2^{(2\delta+1)k} \leq 2^{3\delta k} \leq 2^{d\delta k},\]
since $\delta \geq 1$ and therefore $\sqrt{\delta} \leq 2^{\delta-1} \leq 2^{(\delta-1)k}$. Also by \eqref{eq:newlbound} we have $k-l \geq \gamma k - ed$, and now the lemma follows on recalling \eqref{eq:lambdalowerbound}.
\end{proof}

We are now ready to prove the main theorem.

\begin{proof} (of the main theorem) By assumption, there are integers $p_0$ and $q_0 \geq 1$ such that
\begin{equation}\label{eq:lambdaineq}
\LambdaÊ= (2H)^{-\beta}q_0^{-\delta}\left|\theta-\frac{p_0}{q_0}\right|^{-\gamma} > 1.
\end{equation}
Let $p$ and $q \geq 1$ be any integers. Since $\beta > 0$, $\gamma > 0$ and $\delta > 0$, it follows from \eqref{eq:lambdaineq} that $\left|Ê\theta - \frac{p_0}{q_0}Ê\right| < 1$. This and \eqref{eq:lambdaineq} together with $ e \leq \kappa$ imply that $C \geq 1$, so we can restrict ourselves to pairs $(p,q)$ with $\left| \theta-\frac{p}{q} \right| < 1$. Therefore we can apply Lemma \ref{lem:upperestimate} and obtain
\[Ê\left| \theta -Ê\frac{p}{q} \right| \geq c_3^{-1}q_0^{-\delta k}q^{-e}(2H)^{-\beta k}-\left|\theta-\frac{p_0}{q_0}\right|^{\gamma k - ed}Ê\]
for any $k \geq \frac{ed}{\gamma}$. Using \eqref{eq:lambdaineq}, we can write this as
\begin{equation}\label{eq:lowerboundone}
Ê\left| \theta -Ê\frac{p}{q} \right| \geq X^{-1}\left(2-2\Lambda^{-k}c_3q^e\left|\theta-\frac{p_0}{q_0}\right|^{-ed}\right)
\end{equation}
with
\begin{equation}\label{eq:xdef}
X=2c_3q_0^{\delta k}q^e(2H)^{\beta k}=\tilde{c}c^kq^e.
\end{equation}

Since $\Lambda > 1$, we can fix $k$ now as the least integer $k \geq \frac{ed}{\gamma}$ with
\begin{equation}\label{eq:kbound}
\Lambda^k \geq 2c_3q^e\left|\theta-\frac{p_0}{q_0}\right|^{-ed}=\tilde{c}q^e\left|\theta-\frac{p_0}{q_0}\right|^{-ed}
\end{equation}
and then \eqref{eq:lowerboundone} implies that
\begin{equation}\label{eq:lowerboundfinal}
Ê\left| \theta -Ê\frac{p}{q} \right| \geq \frac{1}{X}.
\end{equation}

Now suppose first that $k-1 < \frac{ed}{\gamma}$. Then we deduce from \eqref{eq:kbound} that
\[Êq \leq \tilde{c}^{-\frac{1}{e}}\Lambda^{\frac{k}{e}}\left|\theta-\frac{p_0}{q_0}\right|^{d} \leq \tilde{c}^{-\frac{1}{e}}\Lambda^{\frac{d}{\gamma}+\frac{1}{e}}\left|\theta-\frac{p_0}{q_0}\right|^{d},\]
which implies together with $\tilde{c} \geq 1$ and \eqref{eq:lambdaineq} that
\[Êq \leq \Lambda^{\frac{d}{\gamma}+\frac{1}{e}}\left|\theta-\frac{p_0}{q_0}\right|^{d}=((2H)^{-\beta}q_0^{-\delta})^{\frac{d}{\gamma}+\frac{1}{e}}\left|\theta-\frac{p_0}{q_0}\right|^{-\frac{\gamma}{e}}.\]
If $\left|\theta-\frac{p_0}{q_0}\right| \geq 1$, this clearly implies that $q<1$, a contradiction. Thus, we can assume that $\left|\theta-\frac{p_0}{q_0}\right| < 1$ and hence $\left|p_0\right| < (\left|\theta\right|+1)q_0$.

Using the properties \eqref{eq:heightone}, \eqref{eq:heighttwo}, and \eqref{eq:heightthree} of the height, we find that
\[Ê\left|\theta-\frac{p_0}{q_0}\right| \geq H\left(\theta-\frac{p_0}{q_0}\right)^{-d} \geq (2H\max\{\left|p_0\right|,q_0\})^{-d}.\]
Together with the above, we deduce that
\[Ê\left|\theta-\frac{p_0}{q_0}\right| \geq (2H(\left|\theta\right|+1)q_0)^{-d} \geq (4H^{d+1}q_0)^{-d},\]
as $\left|\theta\right| \leq H^d$. It follows that
\[Êq \leq ((2H)^{-\beta}q_0^{-\delta})^{\frac{d}{\gamma}+\frac{1}{e}}(4H^{d+1}q_0)^{\frac{d\gamma}{e}} \leq ((2H)^{-\beta}q_0^{-\delta})^{d}(4H^{d+1}q_0)^{d},\]
since $\gamma < 1$ and $e \geq 1$. But this implies that
\[q \leq \left(2^{2-\beta}H^{d+1-\beta}q_0^{1-\delta}\right)^{d} \leq \left(2^{2-\beta}H^{2d-\beta}q_0^{1-\delta}\right)^{d}<1,\]
since $\deltaÊ\geq 1$ and $\beta \geq 2d\delta \geq 2d > 2$, and we get a contradiction.

We conclude that $k-1 \geq \frac{ed}{\gamma}$, and now it follows from the minimality of $k$ in \eqref{eq:kbound} that
\[Ê\Lambda^{k-1} < \tilde{c}q^e\left|\theta-\frac{p_0}{q_0}\right|^{-ed}.\]
We get
\[Ê(k-1)\log \Lambda < e \log q + \log c_4\]
with
\[Êc_4=\tilde{c}\left|\theta-\frac{p_0}{q_0}\right|^{-ed}.\]
It follows that
\[Êk < \frac{eÊ\log q}{\log \Lambda}+\frac{\log c_4}{\log \Lambda}+1.\]
Thus by \eqref{eq:xdef} we have
\[Ê\log X < \log \tilde{c} +e\log q+\left(\frac{eÊ\log q}{\log \Lambda}+\frac{\log c_4}{\log \Lambda}+1\right)\log c.\]
Rearranging terms, we get
\[Ê\log X < \log \tilde{c}+\log c+\frac{\log c}{\logÊ\Lambda}\log c_4+\kappaÊ\log q\]
and after exponentiating the theorem now follows from \eqref{eq:lowerboundfinal}, since
\[\frac{\log c}{\log \Lambda}=\frac{\kappa}{e}-1.\]
\end{proof}

\section{Examples}\label{sec:Examples}

We now study the family of polynomials
\[A(t)=(t-a)Q(t)+P(t),\]
parametrized by $a \in \mathbb{C}$ for fixed $P$ and $Q$, where $Q$ is monic. We will see that under certain conditions one zero of $A$ is exceptionally well approximated by $a$ so that we can apply our main theorem --- for this application we will later assume that $a$ is an integer and that $P$ and $Q$ are coprime and have integer coefficients. The following lemma supplies all the technical information we need to know about this family.

\begin{lem}\label{lem:rouche}
Let $P$ and $Q$ be polynomials in $\mathbb{C}[t]$, where $P$ is of degree $d_0$ and $Q$ is monic of degree $d-1 > d_0$, and let $a$ be any complex number. Put
\[R=\max_{Q(\theta)=0}{\left|\theta\right|}\]
and $A(t)=(t-a)Q(t)+P(t)$. Suppose that
\[\left|a\right| \geq \max\{1,L(P)\}\max\left\{2^{\frac{d_0}{d-d_0-1}},(R+1)^{d_0}\right\}+2R+2.\]
Then the following hold:
\begin{enumerate}[label=(\alph*)]
\item Counting multiplicities, the polynomial $A$ has exactly $d-1$ zeroes $\xi_i$ with $\left|\xi_i\right|<R+1$ ($i=1,\hdots,d-1$) and one zero $\xi$ with $\left|\xi-a\right|<1$, different from the $\xi_i$ ($i=1,\hdots,d-1$).
\item If $P$ and $Q$ are coprime, have integer coefficients and $a$ is an integer, then $A$ is irreducible over $\mathbb{Q}$.
\end{enumerate}
\end{lem}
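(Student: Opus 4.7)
\medskip
\noindent\textit{Proof plan.} For part (a) I would apply Rouché's theorem twice, in each case comparing $A(t)$ with $(t-a)Q(t)$. On the circle $|t|=R+1$, every factor $|t-\theta_j|$ of $|Q(t)|$ is at least one, so $|(t-a)Q(t)|\geq|a|-R-1$, while $|P(t)|\leq L(P)(R+1)^{d_0}$; the $(R+1)^{d_0}$-term in the hypothesis produces the required strict inequality. Rouché then yields that $A$ has the same zero count inside this disk as $(t-a)Q(t)$, namely $d-1$ (the zeros of $Q$, as $|a|>R+1$). On the circle $|t-a|=1$, one has $|Q(t)|\geq(|a|-1-R)^{d-1}$ and $|P(t)|\leq L(P)(|a|+1)^{d_0}$; using $|a|\geq 2R+3$ to get $|a|+1\leq 2(|a|-1-R)$, the $2^{d_0/(d-d_0-1)}$-term in the hypothesis produces the strict inequality and delivers the unique zero $\xi$ near $a$. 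Distinctness of $\xi$ from the $\xi_i$ follows from $|\xi|\geq|a|-1>R+1>|\xi_i|$, and $d-1+1=d$ accounts for all zeros.

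For part (b), suppose $A=DE$ in $\mathbb{Z}[t]$ with $D,E$ monic and $\deg D,\deg E\geq 1$, aiming for contradiction. Two preliminary reductions: first, $|a|$ exceeds the Cauchy root bound $L(P)$ on complex roots of $P$, so $P(a)\neq 0$ and $A(a)\neq 0$, whence $\xi\neq a$. Second, $A$ has no rational integer root. Any such root $\alpha$ would be one of the $\xi_i$ (not $\xi$, by the first reduction, since the only integer within distance $1$ of $a$ is $a$ itself), hence $|\alpha|\leq R$; then $(\alpha-a)Q(\alpha)=-P(\alpha)$ together with $|Q(\alpha)|\geq 1$ (coprimality of $P,Q$ forces $Q(\alpha)\neq 0$, and $Q(\alpha)$ is then a nonzero integer) forces $|a|\leq R+L(P)(R+1)^{d_0}$, contradicting the hypothesis. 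Thus every irreducible factor of $A$ has degree at least $2$.

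WLOG $\xi$ lies in $D$, so every root of $E$ is some $\xi_i$ and $|E(a)|\geq\prod(|a|-|\xi_i|)\geq(|a|-R-1)^{\deg E}$. Combined with the divisibility $E(a)\mid P(a)$ in $\mathbb{Z}$, which gives $|E(a)|\leq L(P)|a|^{d_0}$, this handles the case $\deg E>d_0$ directly. The remaining case $\deg E\leq d_0$ (equivalently $\deg D\geq d-d_0$) is the main obstacle of the proof: here I would exploit that $|Q(\xi_i)|=|P(\xi_i)|/|a-\xi_i|=O(|a|^{-1})$, so that each $\xi_i$ clusters arbitrarily close to a root of $Q$ as $|a|\to\infty$, forcing $E$ to be coefficient-wise close to a monic divisor of $Q$ in $\overline{\mathbb{Q}}[t]$. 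By the discreteness of $\mathbb{Z}[t]$, for $|a|$ sufficiently large $E$ must in fact equal such a divisor in $\mathbb{Z}[t]$, whence $E\mid\gcd(A,Q)=\gcd(P,Q)=1$, contradicting $\deg E\geq 2$. The technical heart of the argument is to make this clustering/discreteness step effective with the explicit bound appearing in the hypothesis.
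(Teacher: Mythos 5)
Your argument for part (a) follows exactly the paper's route: Rouch\'e applied twice against the comparison function $(t-a)Q(t)$, on the circles $|t|=R+1$ and $|t-a|=1$, with the factorization of the hypothesis into $(R+1)^{d_0}$ and $2^{d_0/(d-d_0-1)}$ serving precisely the two estimates. That half is fine.

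Part (b) has a genuine gap, and in fact \emph{two} problems. First, you say outright that the case $\deg E\leq d_0$ is ``the technical heart'' and that you ``would exploit'' a clustering and discreteness argument to be made effective --- that is a plan, not a proof, and it is exactly the hard case you have not closed. Second, the case you claim to handle ``directly,'' namely $\deg E>d_0$, does not actually follow from the stated hypothesis. From $E(a)\mid P(a)$ and $|E(a)|\geq(|a|-R-1)^{\deg E}$ you want $(|a|-R-1)^{d_0+1}>L(P)|a|^{d_0}$, but the hypothesis only guarantees $|a|\geq\max\{1,L(P)\}\max\{2^{d_0/(d-d_0-1)},(R+1)^{d_0}\}+2R+2$, and the exponent $d_0/(d-d_0-1)$ tends to $0$ as $d\to\infty$. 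Concretely, take $R=0$, $L(P)=1$, $d_0=10$, $d=1000$: the hypothesis is satisfied already for $|a|=4$, yet $(|a|-1)^{11}=3^{11}=177147<4^{10}=1048576=L(P)|a|^{d_0}$, so no contradiction is obtained. The bound in the lemma was simply not designed to support this global divisibility argument.

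The paper instead uses a local argument at a single root, which is both shorter and needs only the inequality already established in part (a). Suppose $A=A_1A_2$ nontrivially over $\mathbb{Q}$ and pick $B\in\{A_1,A_2\}$ with $B(\xi)\neq0$; let $\omega$ be any root of $B$. All conjugates of $\omega$ are roots of $B$ and hence among the $\xi_i$, so they all have absolute value $<R+1$, and in particular $\omega\neq a$. Since $A$ is monic with integer coefficients, $\omega$ is an algebraic integer, and so is
\[
\alpha=-Q(\omega)=\frac{P(\omega)}{\omega-a}.
\]
For every embedding $\sigma$ one gets
\[
|\sigma(\alpha)|=\frac{|P(\sigma(\omega))|}{|\sigma(\omega)-a|}\leq\frac{L(P)(R+1)^{d_0}}{|a|-R-1}<1,
\]
the last inequality being exactly what the hypothesis gives. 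An algebraic integer all of whose conjugates have absolute value less than $1$ must be $0$ (its norm is a rational integer of absolute value $<1$), so $Q(\omega)=0$ and then $P(\omega)=0$, contradicting coprimality of $P$ and $Q$. No case split on degrees, no effective clustering statement, and no appeal to the size of $P(a)$ is needed.
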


\begin{proof}
We first prove (a). Put $f(z)=A(z)-P(z)=(z-a)Q(z)$ and $g(z)=P(z)$, both entire functions. Let $K_0$ be the open disk of radius $R+1$ around 0 and $K_1$ the open disk of radius 1 around $a$. We want to apply Rouch\'{e}'s theorem and deduce that $f$ and $f+g=A$ have the same number of zeroes (with multiplicities) in these disks. For this we need to show that $\left|f(z)\right| > \left|g(z)\right|$ for every $z$ on the contour of one of them.

For $z \in \partial K_0$ we have $\left|Q(z)\right| \geq 1$, as $Q$ is monic and
\[Ê\left|z-\theta\right| \geq \left|z\right|-\left|\theta\right| \geq \left|z\right|-R=1\]
for every zero $\theta$ of $Q$. We deduce that
\[ \left|f(z)\right| \geq \left|z-a\right| \geq \left|a\right|-R-1.\]
As $\left|a\right|-R-1 > \max\{1,L(P)\}(R+1)^{d_0}$, it follows that
\[Ê\left|f(z)\right| > \max\{1,L(P)\}(R+1)^{d_0} \geq L(P)(R+1)^{d_0} \geq \left|g(z)\right|\]
as required.

For $z \inÊ\partial K_1$ we have
\[\left|f(z)\right|=\left|Q(z)\right| \geq (\left|a\right|-R-1)^{d-1},\]
since
\[\left|z-\theta\right| \geq \left|z\right|-\left|\theta\right| \geq (\left|a\right|-1)-R\]
for every zero $\theta$ of $Q$ and $Q$ is monic. Hence we get
\[Ê\left|f(z)\right| \geq (\left|a\right|-R-1)^{d-1}.\]

Using $\left|a\right| \geq 2R+3$, which implies that $\left|a\right|-R-1 \geq \frac{\left|a\right|+1}{2}$, we find that
\[ \left|f(z)\right| \geq 2^{-d_0}(\left|a\right|-R-1)^{d-d_0-1}(\left|a\right|+1)^{d_0}.\]
Since $\left|a\right|-R-1 > \max\{1,L(P)\}2^{\frac{d_0}{d-d_0-1}}$ and $d-d_0-1 \geq 1$, it follows that
\[\left|f(z)\right| > \max\{1,L(P)\}^{d-d_0-1}(\left|a\right|+1)^{d_0} \geq L(P)(\left|a\right|+1)^{d_0} \geq \left|g(z)\right|\]
as required.

We can therefore apply Rouch\'{e}'s theorem and deduce that $f$ and $f+g=A$ have the same number of zeroes (with multiplicities) in $K_0$ and $K_1$. The disk $K_0$ is disjoint from the disk $K_1$, since $\left|a\right|-1 > R+1$. Out of the zeroes of $f$, $K_0$ therefore contains exactly the $d-1$ zeroes of $Q$ (counted with multiplicities), and $K_1$ contains exactly the zero $a$. Hence (a) follows.

In order to show (b), assume that $A$ is reducible over $\mathbb{Q}$, so $A=A_1A_2$ with $A_1$, $A_2$ in $\mathbb{Q}[t]\backslash\mathbb{Q}$. Since $\xi$ is a simple zero of $A$, it follows that there is $B \in \{A_1,A_2\}$ with $B\left(\xi\right) \neq 0$. Let $\omega$ be a zero of $B$ and hence a zero of $A$, not equal to $\xi$. It follows that $\left|\omega\right| < R+1$, so in particular $\omegaÊ\neq a$. The polynomial $A$ is monic, since $Q$ is monic and $d > d_0$, and hence $\omega$ is an algebraic integer. It follows that
\[Ê\alpha=\frac{P(\omega)}{\omega-a}=-Q(\omega) \in \mathbb{Q}(\omega)\]
is an algebraic integer as well.

Let $\sigma: \mathbb{Q}(\omega) \to \mathbb{C}$ be an embedding. Then $\sigma(\omega)$ is a zero of $B$, so $\left|\sigma(\omega)\right| <R+1$ and it follows that
\[ \left|\sigma(\alpha)\right|=\left|\frac{P(\sigma(\omega))}{\sigma(\omega)-a}\right| \leq \frac{L(P)(R+1)^{d_0}}{\left|a\right|-R-1}<1.\]
Since this holds for every embedding $\sigma$ of $\mathbb{Q}(\omega)$ and $\alpha \in \mathbb{Q}(\omega)$ is an algebraic integer, it follows that $\alpha=0$ and hence $P(\omega)=Q(\omega)=0$, which contradicts the fact that $P$ and $Q$ are coprime. Hence $A$ is irreducible.
\end{proof}

We now take $a \in \mathbb{Z}$ and $P$, $Q$ coprime in $\mathbb{Z}[t]$ with $Q$ of degree $d-1 > d_0$. If $\left|a\right|$ is large enough, we can apply both parts of the lemma to $A$; it follows that $\xi$ is real (otherwise the complex conjugate $\bar{\xi} \neq \xi$ would also be a zero of $A$ with $\left|\bar{\xi}-a\right|<1$), algebraic of degree $d$, and that
 \begin{equation}\label{eq:heightxi}
 \left|\xi\right|^{\frac{1}{d}}Ê\leq H(\xi) \leq (R+1)\left|\xi\right|^{\frac{1}{d}},
 \end{equation}
 since $\left|\xi\right| > \left|a\right|-1 \geq 1$.
 
\begin{thm}\label{thm:effectivelower}
Let $P$ and $Q$ be coprime polynomials with integer coefficients, where $P$ is of degree $d_0$ and $Q$ is monic of degree $d-1>0$, and let $a$ be any integer. Put
\[R=\max_{Q(\theta)=0}{\left|\theta\right|}\]
and $A(t)=(t-a)Q(t)+P(t)$. Suppose further that
\[ d \geq \hat{d}=2.13d_0+23,\]
\[\left|a\right| \geq L(P)\max\left\{2,(R+1)^{d_0}\right\}+2R+2\]
and $\eta > 0$.

Then there is a unique real zero $\xi$ of $A$ with $\left|\xi-a\right| < 1$, and it is of effective strict type at most $\kappa = \hat{\kappa}+\eta$ for
\[\left|a\right|Ê\geq a_0=a_0(d,\eta,R,P)=\left(2^{2d}(R+1)^{d}L(P)^{2}\right)^{\frac{\kappa}{\eta(11f(\hat{d})-1)}}+1,\]
where
\[Ê\hat{\kappa} = 10\left(1+\frac{1}{11f(d)-1}\right) \leq 10\left(1+\frac{1}{11f(\hat{d})-1}\right)<\hat{d}\]
and
\[f(u)=\left(3-2\sqrt{2}\right)\frac{u-d_0-1}{u+\sqrt{2}-1}.\]

More precisely
\[Ê\left|\xi-\frac{p}{q}\right| \geq \frac{1}{Cq^{\kappa}}\]
for all integers $p$ and $q \geq 1$, where
\[ÊC = C(d,d_0,\eta,R,a)= 2^{42(d_0+\eta+1)d^2}(R+1)^{5(d_0+\eta+1)d^2}\left|a\right|^{14(d_0+\eta+1)d^2}.\]
\end{thm}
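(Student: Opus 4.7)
The plan is to apply the Main Theorem to $\theta = \xi$ with the trivial approximation $p_0 = a$, $q_0 = 1$, and the specific parameter choices $e = 10$, $\epsilon = \sqrt 2 - 1$. First I check the hypotheses of Lemma \ref{lem:rouche}: since $d \ge 2.13 d_0 + 23$ forces $d - d_0 - 1 > d_0$, we have $2^{d_0/(d-d_0-1)} \le 2$, so the assumption on $|a|$ in the theorem implies the one in the lemma. By part (a) there is a unique zero $\xi$ of $A$ in the open disk of radius $1$ about $a$, and this forces $\xi$ to be real because $\bar\xi$ would otherwise give a second zero in the same disk. By part (b), $\xi$ is algebraic of degree exactly $d$. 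The height bound $H(\xi) \le (R+1)(|a|+1)^{1/d}$ then follows from \eqref{eq:heightxi} together with $|\xi| \le |a|+1$.

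Next I bound $|\xi - a|$ from above using $A(\xi) = 0$, which gives $|\xi - a| = |P(\xi)|/|Q(\xi)|$. Since $|\xi| \le |a| + 1$ we have $|P(\xi)| \le L(P)(|a|+1)^{d_0}$, and since every zero $\theta$ of $Q$ satisfies $|\xi - \theta| \ge |\xi| - R \ge |a| - 1 - R$, we have $|Q(\xi)| \ge (|a|-R-1)^{d-1}$. Hence
\[
|\xi - a| \le \frac{L(P)(|a|+1)^{d_0}}{(|a|-R-1)^{d-1}},
\]
which is polynomially small in $|a|$ and is what makes the trivial approximation $a/1$ strong enough to enter the Main Theorem's hypothesis $\Lambda > 1$.

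Plugging in $\epsilon = \sqrt 2 - 1$ yields $\delta = (d+\sqrt 2 - 1)/11$, $\gamma = 2 - \sqrt 2$, $1+\epsilon = \sqrt 2$, and a direct computation shows
\[
\frac{(1-\epsilon)\epsilon}{(d+\epsilon)(1+\epsilon)} = \frac{3 - 2\sqrt 2}{d+\sqrt 2-1},
\]
which, together with $\beta = d\delta(1+\epsilon)/\epsilon$, is exactly what it takes to match $\log\Lambda/\log c$ with $11 f(d) - 1$ asymptotically in $|a|$. The threshold $\hat d = 2.13 d_0 + 23$ is chosen precisely so that the dominant exponent $d - d_0 - 1 - \beta/(d\gamma)$ is positive with some margin, and $a_0$ is calibrated so that the $|a|$-dependent slack in the ratio $\log c/\log\Lambda$ is absorbed into $\eta/10$, yielding $\kappa = e(1+\log c/\log\Lambda) \le \hat\kappa + \eta$ from the Main Theorem.

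Finally the constant $C = c\,\tilde c^{\kappa/e}|\xi-a|^{d(e-\kappa)}$ is read off from the Main Theorem. Since $e < \kappa$ and $|\xi - a| < 1$, the last factor is bounded above by inverting: $|\xi - a|^{-1} \le H(\xi - a)^d \le (2|a|\,H(\xi))^d$ via \eqref{eq:heightthree} and \eqref{eq:heighttwo}. Substituting the bounds on $H(\xi)$, on $c=(2H)^\beta$, and on $\tilde c$, and simplifying, yields the claimed form of $C$. The main obstacle is the bookkeeping: every quantity must be tracked in terms of $d$, $d_0$, $\eta$, $R$, $L(P)$, and $|a|$, and the implicit constants from $\hat\kappa$ and from the absorption of $\eta$ must be controlled uniformly so that they fit into the explicit exponents $42(d_0+\eta+1)d^2$, $5(d_0+\eta+1)d^2$, and $14(d_0+\eta+1)d^2$ appearing in the statement.
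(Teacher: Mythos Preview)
Your plan is essentially the paper's proof: apply the Main Theorem with $\theta=\xi$, $p_0=a$, $q_0=1$, $e=10$, $\epsilon=\sqrt2-1$, verify $\Lambda>1$ via the upper bound $|\xi-a|\le L(P)|\xi|^{d_0}/(|\xi|-R)^{d-1}$, and then track constants. The one substantive deviation is your lower bound for $|\xi-a|$ when estimating the factor $|\xi-a|^{d(e-\kappa)}$ in $C$: you propose the Liouville-type estimate $|\xi-a|\ge(2|a|H(\xi))^{-d}$, whereas the paper instead reuses the identity $|\xi-a|=|P(\xi)|/|Q(\xi)|$ from below, getting
\[
|\xi-a|\ge\frac{|b|(|\xi|-R')^{d_0}}{(|\xi|+R)^{d-1}}>2^{-(d+d_0-1)}|\xi|^{-(d-d_0-1)},
\]
with $R'=\max_{P(\theta)=0}|\theta|\le L(P)$ and $b$ the leading coefficient of $P$. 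This matters for the stated exponents: your height bound introduces an extra factor $(R+1)^{d}$ into $|\xi-a|^{-1}$, which after raising to the power $d(\kappa-10)$ contributes roughly $(R+1)^{d^2(\kappa-10)}$ to $C$. For small $d_0$ and $\eta$ this already exceeds the claimed $(R+1)^{5(d_0+\eta+1)d^2}$, so the bookkeeping will not close with your bound. Switching to the direct estimate above fixes this with no change to the rest of your argument.

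Two smaller points you glossed over but which the paper handles explicitly: the verification that $\hat\kappa<\hat d$ (done by clearing denominators and checking that the resulting quadratic in $d_0$ has positive coefficients after substituting $\hat d=2.13d_0+23$), and the precise reduction showing that $|a|\ge a_0$ implies both $\Lambda_0>1$ and $\kappa_a\le\hat\kappa+\eta$; these are straightforward but require the specific numerics of $f(\hat d)$ and $\lambda=11f(d)$.
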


Since $P$ and $Q$ are coprime, we have $P \neq 0$ and hence $L(P) \geq 1$. Since further $\frac{d_0}{d-d_0-1} \leq 1$ for $dÊ\geq \hat{d}$ and $d-1Ê\geq \hat{d}-1 > d_0$, our polynomial $A$ satisfies all conditions in Lemma \ref{lem:rouche}. We see that $f(u)$ is monotonically increasing for $u \geq d_0+2$. As we furthermore have
\[\lim_{u \to \infty}{f(u)}=3-2\sqrt{2},\]
the upper bound for the effective strict type of $\xi$ tends to
\[Ê\frac{55}{14}(4+\sqrt{2})Ê\approx 21.2701247\hdots\]
with $dÊ\to \infty$ and $\etaÊ\to 0$ (which means that $\left|a\right| \to \infty$).

Our choice of $\hat{d}$ guarantees that the main theorem can be applied, if $\left|a\right|$ is large enough, and that $\hat{\kappa} < \hat{d} \leq d$ so that our theorem is an improvement over Liouville's for $\eta$ small enough. We haven't chosen the smallest possible such $\hat{d}$, so there might be better choices for any given $d_0$. Certainly, the factor $2.13$ is, although reasonably good, not best possible for $d_0 \to \infty$.

\begin{proof}
We want to use the main theorem with $p_0=a$ and $q_0=1$. We will choose the parameters $e$ and $\epsilon$ later in a nearly optimal way. It follows from \eqref{eq:heightxi} that
\begin{equation}\label{eq:cdef}
2^{\beta}\left|\xi\right|^{\frac{\beta}{d}} \leq c \leq 2^{\beta}(R+1)^{\beta}\left|\xi\right|^{\frac{\beta}{d}}.
\end{equation}
We know that $Q(\xi) \neq 0$ as otherwise $Q(\xi)=P(\xi)=0$ and $\xi$ would be a common zero of $P$ and $Q$. Since $\left|\xi\right| > \left|a\right|-1 \geq 1$ and $\left|\xi\right| > \left|a\right|-1 > 2R$, it follows that
\begin{equation}\label{eq:diffdef}
\left|\xi-a\right|=\left|\frac{P(\xi)}{Q(\xi)}\right| \leq \frac{L(P)\left|\xi\right|^{d_0}}{(\left|\xi\right|-R)^{d-1}} < L(P)2^{d-1}\left|\xi\right|^{-(d-d_0-1)}.
\end{equation}

Furthermore, if $b$ is the (non-zero) leading coefficient of $P \neq 0$, then it is well-known that every zero $\theta$ of $P$, which is therefore also a zero of the monic polynomial $\frac{1}{b}P$, satisfies $\left|\theta\right| \leq L\left(\frac{1}{b}P\right)=\frac{L(P)}{\left|b\right|}$. If we put $R'=\max_{P(\theta)=0}{\left|\theta\right|}$, if $d_0 > 0$, and $R'=0$, if $d_0=0$, it follows that $R' \leq \frac{L(P)}{\left|b\right|}$. Since $P$ has integer coefficients, we have $\left|b\right|Ê\geq 1$ and therefore $R' \leq L(P)$. As $\left|\xi\right| > \left|a\right|-1 > 2L(P)$ and $\left|\xi\right| > R$, it follows that
\begin{equation}\label{eq:difflower}
\left|\xi-a\right|=\left|\frac{P(\xi)}{Q(\xi)}\right| \geq \frac{\left|b\right|(\left|\xi\right|-R')^{d_0}}{(\left|\xi\right|+R)^{d-1}} > 2^{-(d+d_0-1)}\left|\xi\right|^{-(d-d_0-1)}.
\end{equation}

We can now use \eqref{eq:cdef} and \eqref{eq:diffdef} to deduce that
\begin{equation}\label{eq:lambdadef}
\Lambda=c^{-1}\left|\xi-a\right|^{-\gamma} > 2^{-\beta-\gamma(d-1)}(R+1)^{-\beta}L(P)^{-\gamma}\left|\xi\right|^{\frac{\beta}{d}(\lambda-1)}=\Lambda_0,
\end{equation}
where 
\begin{equation}\label{eq:smalllambdadefone}
\lambda=\frac{\gamma d(d-d_0-1)}{\beta}=(d-d_0-1)(e+1)g_d(\epsilon)
\end{equation}
with
\[Êg_d(\epsilon)=\frac{\epsilon(1-\epsilon)}{(d+\epsilon)(1+\epsilon)}.\]

If $\Lambda_0 > 1$ and hence $\Lambda > 1$, it follows from the main theorem that $\xi$ is of effective strict type at most
\[Êe\left(1+\frac{\log c}{\log \Lambda}\right) \leq e\left(1+\frac{\log c}{\log \Lambda_0}\right).\]
By \eqref{eq:cdef} and \eqref{eq:lambdadef}, this is at most equal to
\[\kappa_a=e\left(1+\frac{dc_1+\log\left|\xi\right|}{-dc_1-\frac{\lambda(d-1)}{d-d_0-1}\log 2-\frac{\lambda}{d-d_0-1}\log L(P)+(\lambda-1)\log\left|\xi\right|}\right)\]
with $c_1=\log 2+\log(R+1)$. We see that $\kappa_a$ tends to $\kappa_{\infty}=e\left(1+\frac{1}{\lambda-1}\right)$ for $\left|\xi\right| \to \infty$ (or equivalently $\left|a\right| \to \infty$).

For given $e$ we want to choose $\epsilon$ such that $\kappa_{\infty}$ is minimized, which by \eqref{eq:smalllambdadefone} means that $g_d(\epsilon)$ is maximized. This function has a maximum in the interval $[0,1]$ at
\[\epsilon_0(d)=\frac{\sqrt{2d(d+1)}-d}{d+2}.\]
To simplify the following computations we choose 
\begin{equation}\label{eq:epsilondef}
\epsilon=\lim_{dÊ\to \infty}{\epsilon_0(d)}=\sqrt{2}-1
\end{equation}
and get
\begin{equation}\label{eq:smalllambdadef}
\lambda=(e+1)f(d).
\end{equation}

We need $\lambda > 1$ in order to be able to make sure that $\Lambda_0 > 1$, so we have to choose
\[e > \frac{1}{f(d)}-1.\]
We want to minimize
\[\kappa_\infty=e\left(1+\frac{1}{(e+1)f(d)-1}\right).\]
We choose $e=10$, which minimizes the expression
\[e\left(1+\frac{1}{(e+1)(3-2\sqrt{2})-1}\right)Ê\]
for $eÊ\in \mathbb{N}$ and therefore may be expected to give asymptotically the best $\kappa$. We get $\kappa_\infty=\hat{\kappa}$.

Since $f(u)$ is monotonically increasing for $u \geq d_0+2$, we see that
\[\frac{1}{f(d)}-1 \leq \frac{1}{f(\hat{d})}-1=\left(3+2\sqrt{2}\right)\left(1-\frac{d_0+\sqrt{2}}{\hat{d}+\sqrt{2}-1}\right)^{-1}-1.\]
As $\hat{d}+\sqrt{2}-1 \geq 2.13(d_0+\sqrt{2})$, it follows that
\[\left(3+2\sqrt{2}\right)\left(1-\frac{d_0+\sqrt{2}}{\hat{d}+\sqrt{2}-1}\right)^{-1}-1 \leq \left(3+2\sqrt{2}\right)\left(1-\frac{1}{2.13}\right)^{-1}-1 < 10\]
and hence
\begin{equation}\label{eq:guardsguards}
\frac{1}{f(d)}-1 \leq \frac{1}{f(\hat{d})}-1 < 10= e.
\end{equation}
We deduce that $\lambda=11f(d)>1$ as required.

Using \eqref{eq:lambdadef} and \eqref{eq:smalllambdadefone}, we now see that $\Lambda_0 > 1$ is equivalent to
\begin{equation}\label{eq:jingo}
\left|\xi\right|^{\lambda-1} > 2^{d+\frac{\lambda(d-1)}{(d-d_0-1)}}(R+1)^{d}L(P)^{\frac{\lambda}{d-d_0-1}}.
\end{equation}
As $\xi$ should be of effective strict type at most $\kappa=\hat{\kappa}+\eta$, we furthermore need to make sure that
\[Ê\kappa_a \leq \kappa_{\infty}+\eta=\hat{\kappa}+\eta=\kappa.\]
This is equivalent to
\[\frac{dc_1+\log\left|\xi\right|}{-dc_1-\frac{\lambda(d-1)}{d-d_0-1}\log 2-\frac{\lambda}{d-d_0-1}\log L(P)+(\lambda-1)\log\left|\xi\right|} \leq \frac{1}{\lambda-1}+\frac{\eta}{10}.\]
If $\xi$ satisfies \eqref{eq:jingo}, then the denominator of the left-hand side is positive, so we can multiply by it without changing the direction of the inequality. This in turn is equivalent to
\[ \frac{\kappa}{10} d c_1+\left(\frac{\kappa}{10}-1\right)c_2 \leq \frac{\eta(\lambda-1)}{10}\logÊ\left|\xi\right|\]
with
\[c_2=\frac{\lambda(d-1)}{d-d_0-1}\log2+\frac{\lambda}{d-d_0-1}\log L(P) ,Ê\]
since $\kappa=\hat{\kappa}+\eta=10(1+\frac{1}{\lambda-1})+\eta$. This inequality follows from
\begin{equation}\label{eq:thud}
\frac{\kappa}{10}(dc_1+c_2) \leq \frac{\eta(\lambda-1)}{10}\logÊ\left|\xi\right|.
\end{equation}

We see that
\[Êc_2=\lambda \log 2+\frac{\lambda d_0}{d-d_0-1}\log 2+\frac{\lambda}{d-d_0-1}\log L(P) \leq \lambda(2\log 2+\log L(P)),\]
since $d-d_0-1 \geq 1$ and $\frac{d_0}{d-d_0-1} \leq 1$. Since $f$ is monotonically growing and $\lim_{u \to \infty}{f(u)}=3-2\sqrt{2}$, it follows from \eqref{eq:smalllambdadef} that
\begin{equation}\label{eq:menatarms}
Ê\lambda \leq 33-22\sqrt{2} < 2.
\end{equation}
Hence we have
\[Êc_2 \leq (33-22\sqrt{2})(2\log 2+\log L(P)) \leq 2\log L(P)+4\log2.\]

Since $\frac{\eta(\lambda-1)}{10}>0$, we deduce that \eqref{eq:thud} follows in turn from
\[Ê\frac{\kappa}{\eta(\lambda-1)}(dc_1+2\log L(P)+4\log2) \leq \log \left|\xi\right|,\]
which is equivalent to
\[ 2^{\frac{\kappa(d+4)}{\eta(\lambda-1)}}(R+1)^{\frac{\kappa d}{\eta(\lambda-1)}}L(P)^{\frac{2\kappa}{\eta(\lambda-1)}}Ê\leq \left|\xi\right|.\]
As $dÊ\geq 23$, $\left|\xi\right|>\left|a\right|-1$ and $\lambda-1 \geq 11f(\hat{d})-1$, this follows from $\left|a\right| \geq a_0$.

Using $\frac{\kappa}{\eta}=1+\frac{\hat{\kappa}}{\eta}>1$, we likewise deduce from $\left|a\right| \geq a_0$ that
\[Ê\left|\xi\right|^{\lambda-1}>2^{2d}(R+1)^dL(P)^2.\]
But since $\frac{d-1}{d-d_0-1} \leq 2$, $d-d_0-1 \geq 1$, $d \geq 23$ and $\lambda < 2$ by \eqref{eq:menatarms}, this implies \eqref{eq:jingo} and hence $\Lambda_0 > 1$.

It now follows from the main theorem that $\xi$ is of effective strict type at most
\[Ê10\left(1+\frac{\log c}{\log \Lambda}\right) \leq 10\left(1+\frac{\log c}{\logÊ\Lambda_0}\right) \leq \kappa_a \leq \kappa\]
and it remains to estimate $\hat{\kappa}$ and show that $C$ can be bounded by an expression of the required form.

We estimate $\hat{\kappa}$ first. Since $f$ is monotonically increasing, we have
\[Ê\hat{\kappa} \leq 10\left(1+\frac{1}{11f(\hat{d})-1}\right)\]
as required in the theorem. It remains to show that
\begin{equation}\label{eq:feetofclay}
\hat{d}-10\left(1+\frac{1}{11f(\hat{d})-1}\right) > 0.
\end{equation}
If we multiply the left-hand side first by its denominator $11f(\hat{d})-1$, which is positive because of \eqref{eq:guardsguards}, and then by the denominator $\hat{d}+\sqrt{2}-1$ of $f(\hat{d})$, which is obviously positive as well, we get a polynomial in $\hat{d}$ and $d_0$. After the substitution $\hat{d}=2.13d_0+23$, this ends up as $Kd_0^2+Ld_0+M$ with positive $K$, $L$, $M$. This verifies \eqref{eq:feetofclay} and so retroactively $\hat{\kappa} < \hat{d}$.

We proceed to bound $C$. It follows from \eqref{eq:epsilondef} that
\[\frac{\alpha}{2\delta}=\frac{d}{2\epsilon}=\frac{1+\sqrt{2}}{2}d\]
and
\[\frac{e\beta}{\delta}=10d\left(1+\frac{1}{\epsilon}\right)=10(2+\sqrt{2})d.\]
Using this together with \eqref{eq:heightxi}, we estimate $\tilde{c}$ as
\[\tilde{c} \leq 2^{\frac{1+\sqrt{2}}{2}d+12}11^{\frac{1+\sqrt{2}}{2}d+1}(R+1)^{10(2+\sqrt{2})d}\left|\xi\right|^{10(2+\sqrt{2})}.\]
Together with $d \geq 23$, this implies that
\begin{equation}\label{eq:ctildedef}
\tilde{c} \leq 2^{0.27d^2}(R+1)^{1.49d^2}\left|\xi\right|^{0.07d^2}.
\end{equation}

Since $\lambda> 1$ and $\gamma < 1$, we can estimate
\[\beta=\frac{\gamma d(d-d_0-1)}{\lambda} < d(d-d_0-1),\]
using \eqref{eq:smalllambdadefone}. Hence, it follows from \eqref{eq:cdef} that
\begin{equation}\label{eq:cbetterestimate}
c \leq 2^{d(d-d_0-1)}(R+1)^{d(d-d_0-1)}\left|\xi\right|^{d-d_0-1} \leq 2^{d^2}(R+1)^{d^2}\left|\xi\right|^{d}.
\end{equation}

Since $\tilde{c} \geq 1$, $e\left(1+\frac{\log c}{\log \Lambda}\right) \leq \kappa$ and $\left|\xi-a\right|<1$, we have
\begin{equation}\label{eq:bigcthefirst}
C \leq c\tilde{c}^{\frac{\kappa}{e}}\left|\xi-a\right|^{d(e-\kappa)},
\end{equation}
where $\kappa$ is defined as in this theorem and not as in the main theorem.

Using \eqref{eq:guardsguards} and $\eta > 0$, we see that 
\[Ê\kappa-10 = 10\left(1+\frac{1}{11f(d)-1}\right)+\eta-10=\frac{10}{11f(d)-1}+\eta>0.\]
This implies together with \eqref{eq:difflower} and \eqref{eq:bigcthefirst} that
\[C \leq c\tilde{c}^{\frac{\kappa}{10}}\left|\xi-a\right|^{d(10-\kappa)} \leq c\tilde{c}^{\frac{\kappa}{10}}2^{d(d+d_0-1)(\kappa-10)}\left|\xi\right|^{d(d-d_0-1)(\kappa-10)}.\]
Inserting $\kappa=\hat{\kappa}+\eta < \hat{d}+\eta=2.13d_0+23+\eta$ into this inequality and using $d-d_0-1 \leq d$ and $d+d_0-1 \leq 2d$ as well as $\tilde{c} \geq 1$, we get
\[ÊC \leq c\tilde{c}^{\frac{2.13d_0+23+\eta}{10}}2^{2d^2(2.13d_0+13+\eta)}\left|\xi\right|^{d^2(2.13d_0+13+\eta)}\]
and hence
\[C \leq c\tilde{c}^{2.3(d_0+\eta+1)}2^{26(d_0+\eta+1)d^2}\left|\xi\right|^{13(d_0+\eta+1)d^2}.\]
Inserting \eqref{eq:ctildedef} and \eqref{eq:cbetterestimate} into this inequality and using $d \geq 23$ yields
\[CÊ\leq 2^{28(d_0+\eta+1)d^2}(R+1)^{5(d_0+\eta+1)d^2}\left|\xi\right|^{14(d_0+\eta+1)d^2}.\]
Using $\left|\xi\right| < \left|a\right|+1 < 2\left|a\right|$, we deduce the theorem (with a new $C$).
\end{proof}

We present two immediate corollaries, obtained by specializing $P$ and $Q$.

\begin{cor}\label{cor:bombieripoly}
Suppose that $dÊ\geq 23$, $0 < \eta \leq 1$ and
\[A(t)=t^{d}-at^{d-1}\pm1\]
for an integer $a$ with $\left|a\right| \geq 4$. Then there is a unique real zero $\xi$ of $A$ which satisfies ${\left|\xi-a\right|<1}$, and it is of effective strict type at most $\kappa = \hat{\kappa}+\eta$ for
\[\left|a\right| \geq a_0(d,\eta)=2^{\frac{2.6 \kappa d}{\eta}}+1,\]
where
\[Ê\hat{\kappa} = 10\left(1+\frac{1}{11f(d)-1}\right)< 22.94\]
and
\[f(u)=\left(3-2\sqrt{2}\right)\frac{u-1}{u+\sqrt{2}-1}.\]
More precisely
\[Ê\left|\xi-\frac{p}{q}\right| \geq \frac{1}{Cq^{\kappa}}\]
for all integers $p$ and $q \geq 1$ with
\[ÊC=C(d,a)=2^{84d^2}\left|a\right|^{28d^2}.\]
\end{cor}

The result that we quoted in Section \ref{sec:Introduction} now directly follows from this theorem by choosing $\eta=0.05$ and noting that
\begin{equation}\label{eq:aboundonazero}
a_0(d,0.05) \leq 2^{\frac{2.6Ê\cdot 22.99d}{0.05}}+1 \leq 2^{1196d}.
\end{equation}
The bound for the effective strict type asymptotically tends to the same limit as in Theorem \ref{thm:effectivelower}. Cf. Bombieri's results for $A(t)=t^d-at^{d-1}+1$, namely Section V, Example 3 in \cite{B82}, where he showed that the effective strict type of any $\zeta$ which generates $\mathbb{Q}(\xi)$ over $\mathbb{Q}$ is at most $39.2574$, if $d \geq 40$ and $a \geq a_0(d)$, as well as one of the applications in \cite{B87}, where he showed that the effective strict type of any irrational $\zeta$ in $\mathbb{Q}(\xi)$ is at most $13.209446$, if $d$ is large enough and $a \geq a_0(d)$. However, explicit values for both $C$ and $a_0$ are only given for $d=200$ in Example 2 of \cite{B82} with $\kappa=50$.

\begin{proof}
We apply Theorem \ref{thm:effectivelower} with $Q(t)=t^{d-1}$ and $P(t)=\pm1$, so $R=d_0=0$ (thus our $f$ coincides with the previous $f$) and $L(P)=1$. The corollary follows, since $\eta \leq 1$, $11f(\hat{d})-1=11f(23)-1 \geq \frac{1}{1.3}$ and
\[Ê\hat{\kappa} \leq 10\left(1+\frac{1}{11f(23)-1}\right) < 22.94.\]
\end{proof}

\begin{cor}\label{cor:combieripoly}
Suppose that $dÊ\geq 23$, $d$ odd, $0 < \eta \leq 1$ and 
\[A(t)=(t-a)(t^{2}+1)^{\frac{d-1}{2}}\pm1\]
for an integer $a$ with $\left|a\right| \geq 6$. Then there is a unique real zero $\xi$ of $A$ which satisfies ${\left|\xi-a\right|<1}$, and it is of effective strict type at most $\kappa = \hat{\kappa}+\eta$ for
\[\left|a\right| \geq a_0(d,\eta)=2^{\frac{3.9 \kappa d}{\eta}}+1,\]
where
\[Ê\hat{\kappa} =10\left(1+\frac{1}{11f(d)-1}\right) < 22.94\]
and
\[f(u)=\left(3-2\sqrt{2}\right)\frac{u-1}{u+\sqrt{2}-1}.\]
More precisely
\[Ê\left|\xi-\frac{p}{q}\right| \geq \frac{1}{Cq^{\kappa}}\]
for all integers $p$ and $q \geq 1$ with
\[ÊC=C(d,a)=2^{94d^2}\left|a\right|^{28d^2}.\]
\end{cor}

\begin{proof}
We apply Theorem \ref{thm:effectivelower} with $Q(t)=(t^2+1)^{\frac{d-1}{2}}$ and $P(t)=\pm1$, so $R=1$, $d_0=0$ (thus our $f$ coincides with the previous $f$) and $L(P)=1$. The corollary follows, since $\eta \leq 1$, $11f(\hat{d})-1=11f(23)-1 \geq \frac{1}{1.3}$ and
\[Ê\hat{\kappa} \leq 10\left(1+\frac{1}{11f(23)-1}\right) < 22.94.\]
\end{proof}

Analogous results with the same upper bound for the effective strict type hold for polynomials of the type
\[ A(t)=(t-a)\prod_{i=1}^{m}{(t-a_i)}\prod_{j=1}^{n}{(t^2+b_jt+c_j)}\pm1,\]
where $m+2n \geq 22$, $a_1$, \dots, $a_m$ are distinct integers and $(b_1,c_1)$, \dots, $(b_n,c_n)$ are distinct pairs of integers with $b_j^2-4c_j < 0$ ($j=1,\hdots,n$).

Such polynomials define the so-called ABC fields, which are named after Ankeny, Brauer and Chowla, who used them in \cite{ABC56} to construct number fields with large class numbers compared to their discriminant. Our irreducibility proof in Lemma \ref{lem:rouche} is inspired by a similar argument in that article.

\section{Applications}\label{sec:Applications}

We will now apply Corollary \ref{cor:bombieripoly} to derive explicit bounds for the solutions of the corresponding Diophantine equation
\[ x^d-ax^{d-1}y+y^d=m.\]
We will deduce that its solutions grow at most polynomially in $\left|a\right|$.

Of course, we need $\left|a\right| \geq a_0$ in order to apply Corollary \ref{cor:bombieripoly}. For $\left|a\right| < a_0$ we use the following theorem, which was proven in a more refined form by Bugeaud and Gy\"{o}ry with Baker's method of linear forms in logarithms and is far more general than our theorem will be, but whose upper bound depends exponentially rather than polynomially on the height of the form.

\begin{thm}\label{thm:Baker}
Suppose that $F(x,y)$ is an irreducible binary form of degree $d \geq 3$ and with integer coefficients having absolute values at most $\mathcal{H}$. Let $m$ denote any integer. Then all solutions of 
\[ÊF(x,y)=m\]
in integers $x$ and $y$ satisfy
\[Ê\max\{\left|x\right|,\left|y\right|\} \leq \exp\left(d^{40d}\mathcal{H}^{4d}\right)\left|m\right|^{d^{40d}\mathcal{H}^{4d}}.\]
\end{thm}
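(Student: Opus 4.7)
The plan is essentially to appeal to the explicit bound of Bugeaud and Győry \cite{BG96} on the integer solutions of Thue equations, obtained via Baker's method of linear forms in logarithms, and then to absorb their constants into the uniform shape $d^{40d}\mathcal{H}^{4d}$ claimed here. Since the theorem is explicitly attributed to that source in the paragraph preceding its statement, no genuinely new ideas are needed; the work consists of extraction, rescaling, and a trivial case check.

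First I would dispose of the case $m=0$: because $F$ is irreducible of degree $d\geq 3$ over $\mathbb{Q}$, the only integer solution of $F(x,y)=0$ is $(x,y)=(0,0)$, which satisfies the asserted bound trivially. For $m\neq 0$ I would invoke Corollary~1 of \cite{BG96} (or an equivalent Thue-equation form of their main theorem), which produces an explicit upper bound on $\max\{|x|,|y|\}$ of roughly the same exponential-in-$d$, polynomial-in-$\mathcal{H}$ shape, say $\exp\bigl(\Gamma_1(d,\mathcal{H})\bigr)\,|m|^{\Gamma_2(d,\mathcal{H})}$ with fully explicit functions $\Gamma_1,\Gamma_2$.

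The second step is purely bookkeeping: verify that, using $d\geq 3$ and $\mathcal{H}\geq 1$, both $\Gamma_1$ and $\Gamma_2$ are dominated by $d^{40d}\mathcal{H}^{4d}$. Because the coefficient of $d^{c\cdot d}$ and the exponent of $\mathcal{H}$ produced by \cite{BG96} are absolute numerical constants, this reduces to crude monotonicity estimates like $c\cdot d^{c'd}\leq d^{40d}$ and $\mathcal{H}^{c''}\leq \mathcal{H}^{4d}$ for $d\geq 3$. One may also have to reconcile the coefficient-height $\mathcal{H}$ used in our statement with the Mahler measure or naive height of $F$ that sometimes appears in \cite{BG96}; this is handled by the standard comparisons among these heights of polynomials.

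The main obstacle, which however is entirely hidden inside the cited theorem, is the Baker-type lower bound for linear forms in logarithms, applied to the unit equation arising from the factorization $F(x,y)=\prod_i (x-\alpha_i y)=m$ in the ring of integers of the splitting field of $F$; this is where the factor $d^{40d}$ ultimately comes from. Since \cite{BG96} has already done that deep work, in the present paper nothing more is required than a clean quotation of their result and the elementary coarsening just described.
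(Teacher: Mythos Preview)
Your proposal is correct and follows essentially the same route as the paper: dispose of $m=0$ trivially, then quote the explicit Bugeaud--Gy\H{o}ry bound from \cite{BG96} for $m\neq 0$ and coarsen the constants into $d^{40d}\mathcal{H}^{4d}$ using $d\geq 3$. The only minor discrepancy is that the paper invokes Theorem~3 of \cite{BG96} (with the specific substitutions $n=d$, $H=3\mathcal{H}$, $B=e|m|$, giving $c_4=3^{3d+27}d^{18d+18}$ and exponent $H^{2d-2}(\log H)^{2d-1}$) rather than Corollary~1, which in that paper concerns effective irrationality measures; but your hedge ``or an equivalent Thue-equation form'' covers this.
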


\begin{proof}
If $m=0$, the theorem is obvious. If $m \neq 0$, we apply \cite{BG96}, Theorem 3, pp.\ 275sq., with $n=d$, $H=3\mathcal{H}$ ($ \geq 3$, as $\mathcal{H} \geq 1$), $b=m$ and $B=e\left|m\right|$ ($ \geq e=\exp(1)$). It follows that any solution of $F(x,y)=m$ in integers $x$ and $y$ satisfies
\[Ê\max\{\left|x\right|,\left|y\right|\} < \exp\left(c_4 H^{2d-2}(\log H)^{2d-1} \log B\right)\]
with $c_4=c_4(d)=3^{3d+27}d^{18d+18}$. Since $d \geq 3$, we have $c_4 \leq d^{21d+45} \leq d^{36d}$
as well as 
\[ÊH^{2d-2}(\log H)^{2d-1} \leq H^{4d} = 3^{4d}\mathcal{H}^{4d} \leq d^{4d}\mathcal{H}^{4d}\]
and the theorem follows.
\end{proof}

We are almost ready to state our theorem, but there is one other obstacle: As the corollary gives a lower bound for the approximability of only one special zero $\xi$ of $A(t)$ by rationals, we need an additional assumption to make sure that all the other zeroes are non-real so that we can use \eqref{eq:imaginarypartbound} to deduce a lower bound for their approximability by rationals. In this case, this assumption is that $d$ is odd and $a < 0$.

\begin{thm}\label{cor:bombieriequation}
Let $d$, $a$ and $m$ be integers with $dÊ\geq 23$, $d$ odd and $a \leq -4$. Let $x$ and $y$ be any integers satisfying
\[Êx^d-ax^{d-1}y+y^d= m.\]
Then
\[Ê\max\{\left|x\right|,\left|y\right|\} \leq \left\{
	\begin{array}{ll}
		\left|a\right|^{\frac{29d^2}{d-22.99}}\left|m\right|^{\frac{1}{d-22.99}} & \mbox{if } \left|a\right| \geq 2^{1196d} \\
		\exp(2^{4824d^2})\left|m\right|^{2^{4824d^2}} & \mbox{if } \left|a\right| < 2^{1196d}
	\end{array}
\right. .\]
\end{thm}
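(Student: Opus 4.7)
The plan is to split into two regimes according to the size of $|a|$. When $|a|<2^{1196d}$ I will appeal directly to Theorem \ref{thm:Baker}; when $|a|\geq 2^{1196d}$ I will combine Corollary \ref{cor:bombieripoly} for the real root $\xi$ with the imaginary-part bound \eqref{eq:imaginarypartbound} for all other roots, via the factorisation of $F(x,y)$.

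For $|a|<2^{1196d}$, the binary form $F(x,y)=x^d-ax^{d-1}y+y^d$ is irreducible over $\mathbb{Q}$ by Lemma \ref{lem:rouche}(b) applied with $P(t)=1$, $Q(t)=t^{d-1}$ (whose hypothesis reduces to $|a|\geq 3$, which is satisfied). Its coefficients have absolute value at most $\mathcal{H}=|a|<2^{1196d}$, so Theorem \ref{thm:Baker} gives a bound of shape $\exp(E)|m|^E$ with $E=d^{40d}\mathcal{H}^{4d}$. Bounding $d^{40d}\leq 2^{40d\log_2 d}\leq 2^{40 d^2}$ and $\mathcal{H}^{4d}\leq 2^{4\cdot 1196\, d^2}=2^{4784 d^2}$ gives $E\leq 2^{4824 d^2}$, matching the claim in this regime.

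For $|a|\geq 2^{1196 d}$, I first verify that all roots of $A(t)=t^d-at^{d-1}+1$ other than $\xi$ are non-real: under $d$ odd and $a\leq -4$, one has $A(t)>0$ for $t\geq 0$, while $A(-s)=-s^{d}+|a|s^{d-1}+1$ is unimodal in $s>0$ and hence vanishes exactly once there. Thus $A$ has a unique real root, which by Lemma \ref{lem:rouche}(a) is $\xi$, and the other $d-1$ roots $\alpha_2,\dots,\alpha_d$ split into $(d-1)/2$ complex-conjugate pairs. After disposing of $y=0$ (where $|x|=|m|^{1/d}$ trivially fits) and using the symmetry $F(-x,-y)=-F(x,y)$ to assume $y\geq 1$, I exploit
\[
m=F(x,y)=\prod_{i=1}^{d}(x-\alpha_i y).
\]
Corollary \ref{cor:bombieripoly} applied with $\eta=0.05$ (the hypothesis $|a|\geq a_0(d,0.05)$ is exactly \eqref{eq:aboundonazero}) yields $|x-\xi y|\geq (Cy^{\kappa-1})^{-1}$ with $C\leq 2^{84d^2}|a|^{28d^2}$ and $\kappa\leq 22.99$. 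For each non-real $\alpha_i$, as $x,y\in\mathbb{R}$, one has $|x-\alpha_i y|\geq y|\Im\alpha_i|$; since $\alpha_i$ is an algebraic integer whose minimal polynomial is $A$ (irreducible by Lemma \ref{lem:rouche}(b)), its Mahler measure equals $|\xi|<|a|+1$, so $H(\alpha_i)<(|a|+1)^{1/d}$, and \eqref{eq:imaginarypartbound} produces a uniform lower bound on $|\Im\alpha_i|$ in terms of $|a|$ and $d$. Multiplying these lower bounds into $|m|=\prod|x-\alpha_i y|$ gives
\[
y^{d-\kappa}\ \leq\ C\cdot|m|\cdot\prod_{i=2}^{d}|\Im\alpha_i|^{-1},
\]
and taking $(d-\kappa)$-th roots, while absorbing the purely numerical prefactor into a negligible additional power of $|a|$ via the standing hypothesis $|a|\geq 2^{1196 d}$ (which makes $2^{\mathrm{poly}(d)}$ bounded by a small power of $|a|$), yields the claimed bound $y\leq|a|^{29d^2/(d-22.99)}|m|^{1/(d-22.99)}$. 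A bound of the same shape for $|x|$ then follows from $|x|\leq (|a|+1)y+|x-\xi y|$ together with the already-obtained bound on $y$.

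The main obstacle is the explicit bookkeeping of constants in the large-$|a|$ case: the clean exponents $29d^2/(d-22.99)$ and $1/(d-22.99)$ emerge only after tightly tracking $C$, the Mahler-measure estimate on $H(\alpha_i)$, and the $d^2$-type loss in \eqref{eq:imaginarypartbound}, and the hypothesis $|a|\geq 2^{1196d}$ is precisely what allows the residual $2^{\mathrm{poly}(d)}$ factors to be swallowed into an imperceptible perturbation of the $|a|$-exponent. Keeping the bound on $|x|$ (and not just $|y|$) within the stated shape is a secondary technicality to be handled by comparing $|x|$ to $|\xi|\cdot|y|$.
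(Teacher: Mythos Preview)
Your treatment of the small-$|a|$ regime via Theorem~\ref{thm:Baker} and your verification that the $d-1$ roots other than $\xi$ are non-real are both correct and match the paper's argument.

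The gap is in the large-$|a|$ regime. You propose to bound \emph{every} factor $|x-\alpha_i y|$ from below simultaneously (Corollary~\ref{cor:bombieripoly} for the real root, the bound \eqref{eq:imaginarypartbound} for each of the $d-1$ non-real roots) and then multiply. This is valid, but it costs you a factor $\prod_{i=2}^{d}|\Im\alpha_i|^{-1}$. With $H(\alpha_i)\leq(|a|+1)^{1/d}$ and \eqref{eq:imaginarypartbound}, each $|\Im\alpha_i|^{-1}$ is at most $2^{d^2+1}(|a|+1)^{2d}$, so the product contributes an extra $(|a|+1)^{2d(d-1)}$ to the right-hand side of $y^{d-\kappa}\leq C|m|\prod|\Im\alpha_i|^{-1}$. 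After taking $(d-22.99)$-th roots and absorbing powers of $2$ into $|a|$ via $|a|\geq 2^{1196d}$, the exponent of $|a|$ becomes roughly $\frac{30d^2}{d-22.99}$, and adding the $|x|\leq(|a|+1)y$ step pushes it above $29d^2/(d-22.99)$ for all $d\geq 23$. So your scheme proves a theorem of the right shape but with a strictly larger exponent than stated; the ``residual $2^{\mathrm{poly}(d)}$ factors'' you plan to swallow are not the problem---the $(|a|+1)^{2d(d-1)}$ is already a power of $|a|$ and cannot be absorbed.

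The paper avoids this loss by a case split that you are missing. From $\prod_i|x/y-\xi_i|=|m|/|y|^d$ one first selects the index $i$ for which $|x/y-\xi_i|$ is smallest, so that $|x/y-\xi_i|\leq|m|^{1/d}/|y|$. If this closest root is non-real, a \emph{single} application of \eqref{eq:imaginarypartbound} already bounds $|y|$ by $2^{d^2+2}(|a|+1)^{2d}|m|^{1/d}$, well within the claim. If the closest root is $\xi$ and $|y|>2|m|^{1/d}$, then $|x/y-\xi|<\tfrac12$, and now the \emph{other} factors satisfy $|x/y-\xi_i|\geq|\xi-\xi_i|-\tfrac12>|a|-\tfrac52>1$, so the product inequality gives $|x/y-\xi|<|m|/|y|^d$ directly, with no imaginary-part loss at all. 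Combining this with Corollary~\ref{cor:bombieripoly} yields $|y|^{d-\kappa}\leq C|m|$ and hence the stated exponent $29d^2/(d-22.99)$. In short: use the imaginary-part bound once, for the nearest root only, not $d-1$ times.
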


\begin{proof}
Assume first that $y=0$. It follows that $x^{d} = m$ and hence $x$ and $y$ satisfy the bound. Therefore we can assume that $yÊ\neq 0$. Since $\left|a\right| \geq 3$, we can apply Lemma \ref{lem:rouche} with $Q(t)=t^{d-1}$ (so $R=0$) and $P(t)=1$ to $A(t)=t^d-at^{d-1}+1$.
Hence $A(t)$ is irreducible over $\mathbb{Q}$ and we can write
\[ m=A(x,y)=y^dA\left(\frac{x}{y}\right)=y^d\prod_{i=1}^{d}{\left(\frac{x}{y}-\xi_{i}\right)},\]
where $\left|\xi_i\right|<R+1$ for $i<d$ as in Lemma \ref{lem:rouche} and $\xi_d=\xi$ with $\left|\xi-a\right|<1$. It follows that
\begin{equation}\label{eq:rootsproduct}
\prod_{i=1}^{d}{\left|\frac{x}{y}-\xi_{i}\right|} =\frac{\left|m\right|}{\left|y\right|^d}
\end{equation}
and hence
\begin{equation}\label{eq:nearroot}
\left|\frac{x}{y}-\xi_{i}\right| \leq \frac{\left|m\right|^{\frac{1}{d}}}{\left|y\right|}
\end{equation}
for some $i$.

We first treat the case $i < d$: it follows that $\left|\xi_i\right|<1$. Now we have
\[ÊA(t)=(t-a)t^{d-1}+1Ê\geq 1\]
for all $t \in [-1,1]$, since $t-a \geq -a-1 \geq 3$ and $d-1$ is even, so $t^{d-1}$ is non-negative. Therefore, $\xi_i$ must be non-real.

It then follows from \eqref{eq:imaginarypartbound} with $\theta=\xi_i$ and \eqref{eq:nearroot} that
\[Ê\frac{1}{2}(2H(\xi_i)^2)^{-d^2}Ê\leq \left| \Im \xi_i \right| \leq \left|\frac{x}{y}-\xi_{i}\right| \leq \frac{\left|m\right|^{\frac{1}{d}}}{\left|y\right|}.\]
Using $H(\xi)=H(\xi_i)$, since $\xi=\xi_d$ and $\xi_i$ are conjugates, together with the bound \eqref{eq:heightxi} for $H(\xi)$ (where $R=0$) and $\left|\xi\right| < \left|a\right|+1$, we deduce that
 \[Ê\frac{1}{2}(2(\left|a\right|+1)^{\frac{2}{d}})^{-d^2}Ê\leq \frac{\left|m\right|^{\frac{1}{d}}}{\left|y\right|}\]
and hence
\[Ê\left|y\right| \leq 2^{d^2+1}(\left|a\right|+1)^{2d}\left|m\right|^{\frac{1}{d}}.\]

We use \eqref{eq:nearroot} again to get
\[Ê\left|x\right| \leq \left|m\right|^{\frac{1}{d}}+\left|\xi_iy\right| \leq (2^{d^2+1}(\left|a\right|+1)^{2d}+1)\left|m\right|^{\frac{1}{d}},\]
and therefore
\[\max\{\left|x\right|,\left|y\right|\} \leq 2^{d^2+2}(\left|a\right|+1)^{2d}\left|m\right|^{\frac{1}{d}}.\]
This bound is majorized by the bound in the theorem.

We now treat the case $i=d$, so that $\xi_i=\xi$. We have either $\left| yÊ\right| \leq 2\left|m\right|^{\frac{1}{d}}$ and then \eqref{eq:nearroot} implies as above that $x$ and $y$ satisfy the bound, since $\left|\xi_i\right|=\left|\xi\right|<\left|a\right|+1$, or we have $\left|y\right| > 2\left|m\right|^{\frac{1}{d}}$ and then \eqref{eq:nearroot} implies that
\[Ê\left|\frac{x}{y}-\xi\right| < \frac{1}{2}.\]
Since
\[Ê\left|\frac{x}{y}-\xi_i\right| \geq \left|\xi-\xi_i\right|-\left|\frac{x}{y}-\xi\right| > \left|\xi\right|-\left|\xi_i\right|-\frac{1}{2} > (\left|a\right|-1)-\frac{3}{2}\]
for $i=1,\hdots,d-1$ and since $\left|a\right| \geq 4$ implies that $\left|a\right|-\frac{5}{2} > 1$, we can use \eqref{eq:rootsproduct} to get
\[\left|\frac{x}{y}-\xi\right| = \frac{\left|m\right|}{\left|y\right|^d\prod_{i=1}^{d-1}{\left|\frac{x}{y}-\xi_i\right|}} \leq \frac{\left|m\right|}{\left(\left|a\right|-\frac{5}{2}\right)^{d-1}\left|y\right|^d} < \frac{\left|m\right|}{\left|y\right|^d}.\]

We now apply Corollary \ref{cor:bombieripoly} with $\eta=0.05$ and get
\[Ê\left|\frac{x}{y}-\xi\right| \geq \frac{1}{C(d,a)\left|y\right|^{\kappa}}\]
for $\left|a\right| \geq a_0(d,\eta)$, where
\[\kappa = \hat{\kappa}+\eta < 22.94+\eta=22.99\]
and $a_0(d,\eta)Ê\leq 2^{1196d}$ by \eqref{eq:aboundonazero}.

If $\left|a\right| \geq 2^{1196d} \geq a_0(d,\eta)$, we can combine the upper and the lower bound from above and conclude that
\[Ê\left|y\right|^{d-22.99} \leq \left|y\right|^{d-\kappa} \leq C(d,a)\left|m\right|.\]

Using \eqref{eq:nearroot} and $\left|\xi\right| < \left|a\right|+1 < 2\left|a\right|$, we get
\[Ê\max\{\left|x\right|,\left|y\right|\} \leqÊ\left|m\right|^{\frac{1}{d}}+2\left|a\right|\left(C(d,a)\left|m\right|\right)^{\frac{1}{d-22.99}}\]
and therefore
\[Ê\max\{\left|x\right|,\left|y\right|\} \leq 4\left|a\right|C(d,a)^{\frac{1}{d-22.99}}\left|m\right|^{\frac{1}{d-22.99}}.\]
As $\frac{d}{d-22.9} \geq 1$, it follows that
\[\max\{\left|x\right|,\left|y\right|\} \leq 2^{\frac{84d^2+2d}{d-22.99}}\left|a\right|^{\frac{28d^2+d}{d-22.99}}\left|m\right|^{\frac{1}{d-22.99}}.\]
Using $2^d \leq \left|a\right|^{\frac{1}{1196}}$ as well as $d \geq 23$, we get the desired upper bound.

If $\left|a\right| < 2^{1196d}$, we can use Theorem \ref{thm:Baker} with $\mathcal{H}=2^{1196d}$ and get the desired bound, using that $d \leq 2^d$. This completes the proof.
\end{proof}

\section{Applications II}\label{sec:ApplicationsII}
\begin{thm}
Let $d$ and $a$ be integers with $d \geq 25$, $d$ odd and $\left|a\right| \geq 2^{164d}$. Then the equation
\[Ê(x-ay)(x^2+y^2)^{\frac{d-1}{2}}-y^d=x+y\]
has at most $11$ solutions in integers $x$ and $y$.
\end{thm}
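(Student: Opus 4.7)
The plan is to combine Corollary~\ref{cor:combieripoly} applied to $A(t)=(t-a)(t^2+1)^{(d-1)/2}-1$ with a gap principle for approximations of its real root $\xi$. By Lemma~\ref{lem:rouche}, $A$ is irreducible over $\mathbb{Q}$, has a unique real root $\xi$ with $|\xi-a|<1$, and $d-1$ other roots $\xi_i$ in the disk $|z|<2$. Substituting $|\xi_i|<2$ into the defining relation $(\xi_i-a)(\xi_i^2+1)^{(d-1)/2}=1$ gives $|\xi_i^2+1|\leq (2/|a|)^{2/(d-1)}$, which for $|a|\geq 2^{164d}$ is minuscule; writing $\xi_i=\pm i+\delta_i$ yields $|\delta_i|$ tiny, so the $\xi_i$ are non-real, conjugate in pairs, and $|\Im\xi_i|\geq 1/2$. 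Choosing $\eta>0$ so that $|a|\geq 2^{164d}$ implies the hypothesis $|a|\geq a_0(d,\eta)=2^{3.9\kappa d/\eta}+1$ of Corollary~\ref{cor:combieripoly} (one can take $\eta\approx 0.56$) supplies an effective strict type $\kappa=\hat\kappa+\eta<23.5$ with constant $C=2^{94d^2}|a|^{28d^2}$.

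I would first dispose of $y=0$: the equation reduces to $x^d=x$, giving the three solutions $(0,0),(\pm1,0)$. For $y\neq 0$, the involution $(x,y)\mapsto(-x,-y)$ is solution-preserving (since $d$ is odd), so it suffices to bound by $4$ the number of solutions with $y>0$, which yields $3+2\cdot 4=11$. One also rules out $x+y=0$ directly: setting $x=-y$ forces $(1+a)\,2^{(d-1)/2}=-1$, impossible for $|a|\geq 3$.

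Factor the equation as $\prod_{i=1}^{d}(x-\xi_i y)=x+y$ with $\xi_d=\xi$. Using $|x-\xi_i y|\geq y|\Im\xi_i|\geq y/2$ for $i<d$ gives $|x-\xi y|\leq 2^{d-1}|x+y|/y^{d-1}$. Since moreover $|\xi-\xi_i|\geq|a|-3$ for $i<d$, whenever $|x/y|\in[|a|/2,2|a|]$ each factor $|x-\xi_i y|$ is at least $|a|y/4$, upgrading the bound to
\[\bigl|x/y-\xi\bigr|\;\leq\;\frac{c_d}{|a|^{d-2}\,y^{d-1}}\]
for some $c_d$ of order $2^{O(d)}$. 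The complementary cases $|x/y|<|a|/2$ or $|x/y|>2|a|$ force $y\leq 2$ by elementary majorization of the factorization, and a case-check of $y\in\{1,2\}$ against the polynomial equation rules out integer solutions when $|a|\geq 2^{164d}$ (the only close-to-$\xi$ candidate would be $x=ay$, which forces $a$ to be a very specific small integer, excluded by our hypothesis). Combined with $|\xi-x/y|\geq 1/(Cy^\kappa)$ from Corollary~\ref{cor:combieripoly}, the displayed bound also yields an upper bound $y\leq Y$ with $\log_2 Y$ polynomial in $d^2\log|a|$.

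The heart of the argument is the gap principle: for distinct solutions $(x_1,y_1),(x_2,y_2)$ with $0<y_1\leq y_2$, the trivial inequality $|x_1 y_2-x_2 y_1|\geq 1$ gives $|x_1/y_1-x_2/y_2|\geq 1/(y_1 y_2)$, whence by the triangle inequality and the displayed bound
\[y_2\;\geq\;\frac{|a|^{d-2}\,y_1^{d-2}}{2c_d}.\]
Iterating this doubly-exponential growth from $y_1\geq 1$, with $|a|\geq 2^{164d}$ and $d\geq 25$, a direct numerical computation shows that $y_5$ already exceeds $Y$, so at most $4$ solutions with $y>0$ can occur. The main obstacle is numerical bookkeeping: ensuring that the hypotheses $d\geq 25$ and $|a|\geq 2^{164d}$ are just barely enough to cap the iteration at $4$ steps (so that the total count is exactly $\leq 11$), and that the small-$y$ exceptional cases, where Corollary~\ref{cor:combieripoly}'s lower bound is not yet informative, are entirely ruled out by the elementary arguments sketched above.
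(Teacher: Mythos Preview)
Your overall strategy matches the paper's: reduce to $y>0$ via the odd symmetry, factor through the roots of $A(t)$, show that proximity to a non-real root forces $y$ small, and for the remaining solutions near $\xi$ combine Corollary~\ref{cor:combieripoly} with a gap principle to cap their number at $4$; the final count $3+2\cdot 4=11$ is the same.

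The one place where your sketch diverges from the paper and becomes genuinely incomplete is the small-$y$ analysis. You propose to ``case-check $y\in\{1,2\}$'' (the paper's threshold is actually $y\leq 3$, and its gap iteration starts at $y_1\geq 4$) by noting that ``the only close-to-$\xi$ candidate would be $x=ay$''. But for each fixed $y\in\{1,2,3\}$ the equation $A(x,y)=x+y$ is a degree-$d$ polynomial condition on $x$, and ruling out every integer $x$ needs more than dismissing a single value; your ``elementary majorization'' for the complementary range $|x/y|\notin[|a|/2,2|a|]$ is also left unsubstantiated. The paper handles all of this in one stroke: for fixed $y$ with $0<|y|\leq 3$, set $\tilde a=ay$, $Q(t)=(t^2+y^2)^{(d-1)/2}$, $P(t)=-t-y^d-y$, and observe that $\tilde A(t)=(t-\tilde a)Q(t)+P(t)$ is exactly $A(t,y)-t-y$. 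Since $P,Q$ are coprime and $|\tilde a|\geq |a|$ is large, Lemma~\ref{lem:rouche}(b) gives that $\tilde A$ is irreducible over $\mathbb{Q}$ and hence has no rational root whatsoever. This irreducibility trick is the one idea your outline is missing; it replaces your hand-waved case analysis entirely.
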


The equation has at least three solutions $(0,0)$ and $(\pm1,0)$ for any such $d$ and $a$.
If $a=-b^{d-1}-1$ for $b \in \mathbb{N}$, there are at least two further solutions $\pm(ab,b)$. It appears likely that these are all the solutions there are, at least for $\left|a\right|$ large enough, but a proof of this seems to be out of reach with our method.

\begin{proof}
We write
\[ÊA(x,y)=(x-ay)(x^2+y^2)^{\frac{d-1}{2}}-y^d.\]
We assume that $x$ and $y$ are integers, satisfying $A(x,y)=x+y$. We see that $-x$ and $-y$ satisfy the equation as well, since both its sides are homogeneous of odd degree. Therefore the solutions come in pairs $\pm(x,y)$ (except for $(0,0)$) and we can assume that $y \geq 0$. We will be mainly concerned with bounding the number of possibilities for $y$ and show in the end how the theorem follows from this. In the following we assume that $y > 0$.

Since $\left|a\right| \geq 5$, we can apply Lemma \ref{lem:rouche} with $S=\{1\}$ to
\[A(t)=(t-a)(t^2+1)^{\frac{d-1}{2}}-1\]
and write
\begin{equation}\label{eq:factors}
x+y=A(x,y)=y^dA\left(\frac{x}{y}\right)=\prod_{k=1}^{d}{(x-\xi_k y)},
\end{equation}
where $\left|\xi_k\right| < 2$ for $k < d$ and $\xi_{d}=\xi$ with $\left|\xi-a\right|<1$.

We choose $j$ such that
\begin{equation}\label{eq:thefifthelephant}
\left|x-\xi_j y\right| = \min_{k=1,\hdots,d}{\left|x-\xi_k y\right|}.
\end{equation}
Since $\left|x+y\right| \leq 2\max\{\left|x\right|,y\}$, it follows from \eqref{eq:factors} and the definition of $j$ that
\begin{equation}\label{eq:anapproximation}
Ê\left|x-\xi_j y\right| \leq 2^{\frac{1}{d}}\max\{\left|x\right|,y\}^{\frac{1}{d}}.
\end{equation}

From now on we assume that $y \geq 3$. We either have $\left|x\right| \leq y$ or $\left|x\right| > y$. In the latter case, it follows that $\left|x\right| > 3 \geq 2^{\frac{d+1}{d-1}}$ and therefore $2^{\frac{1}{d}}\left|x\right|^{\frac{1}{d}}Ê\leq \frac{\left|x\right|}{2}$. As $\left|x\right| > y$, this implies together with \eqref{eq:anapproximation} that
\[ \left|x\right|Ê\leq \left|\xi_j y\right|+\left|x-\xi_j y\right|Ê\leq \left|\xi_j y\right|+2^{\frac{1}{d}}\left|x\right|^{\frac{1}{d}} \leq \left|\xi_j y\right|+\frac{\left|x\right|}{2}.\]
We deduce that
\begin{equation}\label{eq:xintermsofy}
\left|x\right| \leq 2\max\{1,\left|\xi_j\right|\}y,
\end{equation}
which also holds if $\left|x\right| \leq y$.

If $j<d$ and $\Im \xi_j \geq 0$, we see that
\[Ê\left|\xi_j-i\right|=\left(\frac{1}{\left|\xi_j+i\right|^{\frac{d-1}{2}}\left|\xi_j-a\right|}\right)^{\frac{2}{d-1}} \leq \frac{1}{(\left|a\right|-2)^{\frac{2}{d-1}}} \leq \frac{1}{2}\]
and hence
\begin{equation}\label{eq:nightwatch}
\left|x-\xi_jy\right| \geq y\Im \xi_j \geq \frac{y}{2}.
\end{equation}
If $\Im \xi_j \leq 0$, the same inequality follows by interchanging $i$ and $-i$.

On the other hand it follows from $\left|\xi_j\right| < 2$, \eqref{eq:anapproximation} and \eqref{eq:xintermsofy} that
\[Ê\left|x-\xi_j y\right| \leq 2^{\frac{3}{d}}y^{\frac{1}{d}}.\]
Combining this with \eqref{eq:nightwatch}, we conclude that $y^{\frac{d-1}{d}} \leq 2^{\frac{d+3}{d}}$ and therefore
\[Êy \leq 2^{\frac{d+3}{d-1}} \leq 2^{\frac{28}{24}} \leq 3.\]
Thus we have proven that $y \leq 3$, if $y \geq 3$, and hence $y \leq 3$ unconditionally in the case $j < d$.

If $j=d$ and again $y \geq 3$, it follows from \eqref{eq:xintermsofy} and $\left|\xi\right|<\left|a\right|+1$ that $\left|x\right| \leq 2(\left|a\right|+1)y$. Together with \eqref{eq:factors}, this implies that
\begin{equation}\label{eq:snuff}
\prod_{k=1}^{d}{\left|\frac{x}{y}-\xi_k\right|} \leq \frac{\left|x\right|+y}{y^d} \leq \frac{2\left|a\right|+3}{y^{d-1}}.
\end{equation}
We see that
\[Ê\left|\frac{x}{y}-\xi_k\right| \geqÊ\left|\xi-\xi_k\right|-\left|\frac{x}{y}-\xi\right| \geq \left|\xi-\xi_k\right|-\left|\frac{x}{y}-\xi_k\right|\]
for $k<d$ because of \eqref{eq:thefifthelephant} and therefore
\[\left|\frac{x}{y}-\xi_k\right| \geqÊ\frac{\left|\xi-\xi_k\right|}{2} \geq \frac{\left|a\right|-3}{2}.\]
Thus we deduce from \eqref{eq:snuff} that
\begin{equation}\label{eq:evenbetterapproximation}
\left|\frac{x}{y}-\xi\right| \leq \frac{2^{d-1}(2\left|a\right|+3)}{(\left|a\right|-3)^{d-1}y^{d-1}} \leq \frac{\frac{9}{2}\cdot2^{d-2}}{(\left|a\right|-3)^{d-2}y^{d-1}} \leq \frac{\frac{9}{2}\cdot4^{d-2}}{\left|a\right|^{d-2}y^{d-1}},
\end{equation}
since $\frac{2\left|a\right|+3}{\left|a\right|-3}=2+\frac{9}{\left|a\right|-3} \leq \frac{9}{4}$ and $\frac{\left|a\right|-3}{2} \geqÊ\frac{\left|a\right|}{4}$.

Now suppose that we have two such solutions $(x,y)$ and $(x',y')$. If $\frac{x}{y}=\frac{x'}{y'}$, it follows that $(x',y')=\lambda(x,y)$ for some rational $\lambdaÊ\neq 0$, and hence
\[\lambda^d(x+y)=\lambda^dA(x,y)=A(x',y')=x'+y'=\lambda(x+y),\]
where $A(x',y')=y'^{d}A\left(\frac{x'}{y'}\right)$. If $x+yÊ= 0$, it follows that $A(x,y)=0$ and therefore also $A\left(\frac{x}{y}\right)=0$. But this contradicts the irreducibility of $A(t)$ over $\mathbb{Q}$ provided by Lemma \ref{lem:rouche}(b), so we deduce that $x+y \neq 0$. It follows that $\lambda^{d-1}=1$, which has exactly two real solutions $\lambda = \pm 1$, so $(x',y')=\pm(x,y)$. Since $y>0$ and $y'>0$, this is only possible if $(x,y)=(x',y')$. It follows that $\frac{x}{y} \neq \frac{x'}{y'}$, if $(x,y)Ê\neq (x',y')$. If further $y \leq y'$, it follows from \eqref{eq:evenbetterapproximation} that
\[Ê\frac{1}{yy'} \leq \left|\frac{x}{y}-\frac{x'}{y'}\right| \leq \left|\frac{x}{y}-\xi\right|+\left|\frac{x'}{y'}-\xi\right| \leq \frac{9\cdot4^{d-2}}{\left|a\right|^{d-2}y^{d-1}}\]
and hence
\[Êy' \geq \frac{1}{9}\left(\frac{\left|a\right|}{4}\right)^{d-2}y^{d-2}.\]

If there are $n$ different such solutions $(x_k,y_k)$ ($k=1,\hdots,n$) with
\[4 \leq y_1 \leq \hdots \leq y_n,\]
and $n \geq 3$, it follows that
\[Êy_2 \geq \frac{1}{9}\left(\frac{\left|a\right|}{4}\right)^{d-2}y_1^{d-2} \geq \frac{1}{9}4^{d-2}\left(\frac{\left|a\right|}{4}\right)^{d-2} \geq 4^{d-4}\left(\frac{\left|a\right|}{4}\right)^{d-2},Ê\]
\[Êy_3 \geq \frac{1}{9}\left(\frac{\left|a\right|}{4}\right)^{d-2}y_2^{d-2} \geq \frac{1}{9}4^{(d-2)(d-4)}\left(\frac{\left|a\right|}{4}\right)^{(d-1)(d-2)} \geq 4^{(d-4)^2}\left(\frac{\left|a\right|}{4}\right)^{(d-2)^2}\]
and then by induction that
\begin{equation}\label{eq:exponentiallowerboundtwo}
y_n \geq 4^{(d-4)^{n-1}}\left(\frac{\left|a\right|}{4}\right)^{(d-2)^{n-1}}.
\end{equation}

On the other hand, Corollary \ref{cor:combieripoly} with $\eta=0.56$ implies that
\[Ê\left|\frac{x_n}{y_n}-\xi\right| \geq \frac{1}{C(d,a)y_n^{23.5}},\]
since $\kappa=\hat{\kappa}+\eta<22.94+\eta=23.5$ and $a_0(d,0.56) \leq 2^{\frac{3.9 \cdot 23.5d}{0.56}}+1 \leq 2^{164d} \leq \left|a\right|$. Together with \eqref{eq:evenbetterapproximation}, this implies that
\[y_n^{0.5}Ê\leq y_n^{d-24.5}Ê\leq \frac{9}{2}C(d,a)\frac{4^{d-2}}{\left|a\right|^{d-2}} \leq \frac{9}{2}C(d,a).\]
Using $d \geq 25$, we deduce that
\begin{equation}\label{eq:thanksforallthefish}
y_n \leq \left(\frac{9}{2}C(d,a)\right)^2 \leq 2^{189d^2}\left|a\right|^{56d^2}.
\end{equation}

If $n \geq 5$, it follows from \eqref{eq:exponentiallowerboundtwo} that
\[Êy_n \geq 4^{(d-4)^{n-1}}\left(\frac{\left|a\right|}{4}\right)^{(d-2)^{n-1}} \geq 4^{(d-4)^4}\left(\frac{\left|a\right|}{4}\right)^{(d-2)^4} \geq 2^{2(d-4)^4}\left|a\right|^{\frac{(d-2)^{4}}{2}},\]
because $\left|a\right| \geq 16$ and hence $\frac{\left|a\right|}{4} \geq \left|a\right|^{\frac{1}{2}}$. Furthermore, it follows from $d \geq 25$ that
\[Ê2^{2(d-4)^4}\left|a\right|^{\frac{(d-2)^4}{2}} \geq 2^{882(d-4)^2}\left|a\right|^{\frac{529}{2}(d-2)^2} > 2^{\frac{14112}{25}d^2}\left|a\right|^{\frac{4232}{25}d^2},\]
as $d-2 > d-4 > \frac{4}{5}d$. We deduce that
\[Êy_n > 2^{\frac{14112}{25}d^2}\left|a\right|^{\frac{4232}{25}d^2} > 2^{564d^2}\left|a\right|^{169d^2}.\]
But this contradicts \eqref{eq:thanksforallthefish}, so we conclude that $n \leq 4$.

Summarizing, we have $y \leq 3$ for all but $n \leq 4$ solutions $(x,y)$. Since the solutions (apart from $(0,0)$) come in pairs $\pm(x,y)$, it follows that $\left|y\right| \leq 3$ for all but $2nÊ\leq 8$ solutions $(x,y)$. If $y=0$, we see that there are exactly three solutions $x=0$, $\pm1$.

If $0 < \left|y\right| \leq 3$, we define $Q(t)=(t^2+y^2)^{\frac{d-1}{2}}$ and $P(t)=-t-y^d-y$ and put $\tilde{a}=ay$. We see that $P$ and $Q$ are coprime and $Q$ is monic. Furthermore $P$ has degree $d_0=1$, $L(P) \leq 3^{d+1}$ and
\[R=\max_{Q(\theta)=0}{\left|\theta\right|}=\left|y\right| \leq 3.\]
It follows that
\[ \left|\tilde{a}\right| \geq \left|a\right| \geq 2^{164d} \geq 4\cdot3^{d+1}+8 \geq L(P)\max\left\{2^{\frac{d_0}{d-d_0-1}},(R+1)^{d_0}\right\}+2R+2,\]
so the polynomial $\tilde{A}(t)=(t-\tilde{a})Q(t)+P(t)$ is irreducible over $\mathbb{Q}$ by Lemma \ref{lem:rouche} (note that $L(P) \geq 1$).
As $\tilde{A}(x)=A(x,y)-x-y$, this implies in particular that there is no integer $x$ with $A(x,y)=x+y$ and now the theorem follows.
\end{proof}

\section*{Acknowledgements}
I thank David Masser for his innumerable comments on the work (originally my master thesis) and his help in preparing it for publication.

\bibliographystyle{abbrv}
\bibliography{Bibliography}

\def\cprime{$'$}
\begin{thebibliography}{10}

\bibitem{ABC56}
N.~C. Ankeny, R.~Brauer, and S.~Chowla.
\newblock A note on the class-numbers of algebraic number fields.
\newblock {\em Amer. J. Math.}, 78:51--61, 1956.

\bibitem{B82}
E.~Bombieri.
\newblock On the {T}hue-{S}iegel-{D}yson theorem.
\newblock {\em Acta Math.}, 148:255--296, 1982.

\bibitem{B87}
E.~Bombieri.
\newblock Lectures on the {T}hue principle.
\newblock In {\em Analytic number theory and {D}iophantine problems
  ({S}tillwater, {OK}, 1984)}, volume~70 of {\em Progr. Math.}, pages 15--52.
  Birkh\"auser Boston, Boston, MA, 1987.

\bibitem{BC03}
E.~Bombieri and P.~B. Cohen.
\newblock An elementary approach to effective {D}iophantine approximation on
  {$\Bbb G_m$}.
\newblock In {\em Number theory and algebraic geometry}, volume 303 of {\em
  London Math. Soc. Lecture Note Ser.}, pages 41--62. Cambridge Univ. Press,
  Cambridge, 2003.

\bibitem{BG06}
E.~Bombieri and W.~Gubler.
\newblock {\em Heights in {D}iophantine geometry}, volume~4 of {\em New
  Mathematical Monographs}.
\newblock Cambridge University Press, Cambridge, 2006.

\bibitem{BS87}
E.~Bombieri and W.~M. Schmidt.
\newblock On {T}hue's equation.
\newblock {\em Invent. Math.}, 88(1):69--81, 1987.

\bibitem{BG96}
Y.~Bugeaud and K.~Gy{\H{o}}ry.
\newblock Bounds for the solutions of {T}hue-{M}ahler equations and norm form
  equations.
\newblock {\em Acta Arith.}, 74(3):273--292, 1996.

\bibitem{D47}
F.~J. Dyson.
\newblock The approximation to algebraic numbers by rationals.
\newblock {\em Acta Math.}, 79:225--240, 1947.

\bibitem{F71}
N.~I. Fel{\cprime}dman.
\newblock An effective power sharpening of a theorem of {L}iouville.
\newblock {\em Izv. Akad. Nauk SSSR Ser. Mat.}, 35:973--990, 1971.

\bibitem{MS95}
J.~Mueller and W.~M. Schmidt.
\newblock The generalized {T}hue inequality.
\newblock {\em Compositio Math.}, 96(3):331--344, 1995.

\bibitem{R55}
K.~F. Roth.
\newblock Rational approximations to algebraic numbers.
\newblock {\em Mathematika}, 2:1--20; corrigendum, 168, 1955.

\bibitem{S21}
C.~Siegel.
\newblock Approximation algebraischer {Z}ahlen.
\newblock {\em Math. Z.}, 10(3-4):173--213, 1921.

\bibitem{T09}
A.~Thue.
\newblock \"{U}ber {A}nn\"aherungswerte algebraischer {Z}ahlen.
\newblock {\em J. Reine Angew. Math.}, 135:284--305, 1909.

\end{thebibliography}

\vfill

\end{document}